\numberwithin{equation}{section}
\numberwithin{figure}{section}
\theoremstyle{plain}
\newtheorem{thm}{\protect\theoremname}[section]
  \theoremstyle{plain}
  \newtheorem{cor}[thm]{\protect\corollaryname}
  \theoremstyle{remark}
  \newtheorem*{rem*}{\protect\remarkname}
  \theoremstyle{definition}
  \newtheorem{defn}[thm]{\protect\definitionname}
  \theoremstyle{plain}
  \newtheorem{prop}[thm]{\protect\propositionname}
  \theoremstyle{remark}
  \newtheorem{rem}[thm]{\protect\remarkname}
  \theoremstyle{definition}
  \newtheorem{example}[thm]{\protect\examplename}
  \theoremstyle{plain}
  \newtheorem{conjecture}[thm]{\protect\conjecturename}
  \theoremstyle{remark}
  \newtheorem*{acknowledgement*}{\protect\acknowledgementname}
  \theoremstyle{plain}
  \newtheorem{fact}[thm]{\protect\factname}
  \theoremstyle{plain}
  \newtheorem{lem}[thm]{\protect\lemmaname}
\newcommand{\e}{\mathrm{e}}
\newcommand{\1}{\mathbbm{1}}
\newcommand{\N}{\mathbb{N}}
\newcommand{\F}{\mathbb{F}}
\newcommand{\Z}{\mathbb{Z}}
\newcommand{\R}{\mathbb{R}}
\renewcommand{\Pi}{\pi}
\renewcommand{\emptyset}{\varnothing}
\renewcommand{\hat}{\widehat}
\newcommand\id{\mathrm{id}}
\DeclareMathOperator*{\card}{card}
  \providecommand{\acknowledgementname}{Acknowledgement}
  \providecommand{\conjecturename}{Conjecture}
  \providecommand{\corollaryname}{Corollary}
  \providecommand{\definitionname}{Definition}
  \providecommand{\examplename}{Example}
  \providecommand{\factname}{Fact}
  \providecommand{\lemmaname}{Lemma}
  \providecommand{\propositionname}{Proposition}
  \providecommand{\remarkname}{Remark}
\providecommand{\theoremname}{Theorem}
\begin{document}

\title{Recurrence and Pressure for Group Extensions}
\begin{abstract}
We investigate the thermodynamic formalism for recurrent potentials
on group extensions of countable Markov shifts. Our main result characterises
recurrent potentials depending only on the base space, in terms of
the existence of a conservative product measure and a homomorphism
from the group into the multiplicative group of real numbers. We deduce
that, for a recurrent potential depending only on the base space,
the group is necessarily amenable. Moreover, we give equivalent conditions
for the base pressure and the skew product pressure to coincide. Finally,
we apply our results to analyse the Poincaré series of Kleinian groups
and the cogrowth of group presentations.
\end{abstract}

\author{Johannes Jaerisch}

\thanks{The author was supported by the research fellowship JA 2145/1-1 of
the German Research Foundation (DFG)}

\address{Department of Mathematics, Graduate School of Science, Osaka University,
1-1 Machikaneyama, Toyonaka, Osaka, 560-0043 Japan }

\email{jaerisch@cr.math.sci.osaka-u.ac.jp}

\urladdr{http://cr.math.sci.osaka-u.ac.jp/\textasciitilde{}jaerisch/}

\keywords{Thermodynamic formalism; group extension; recurrence; amenability;
Gurevi\v{c} pressure; Perron-Frobenius}

\subjclass[2000]{37D35; 43A07; 37C85}

\date{27th May 2013}

\maketitle

\section{Introduction and Statement of Results}

In this paper, we investigate the central ideas of the thermodynamic
formalism for recurrent potentials in the context of group extensions
of countable Markov shifts $\left(\Sigma,\sigma\right)$ with alphabet
$I\subset\N$ and left shift map $\sigma:\Sigma\rightarrow\Sigma$.
For a countable group $G$ and for a semigroup homomorphism $\Psi:I^{*}\rightarrow G$,
where $I^{*}$ denotes the free semigroup generated by $I$, the skew
product dynamical system $\sigma\rtimes\Psi$, which is given by 
\[
\sigma\rtimes\Psi:\Sigma\times G\rightarrow\Sigma\times G,\quad\left(\sigma\rtimes\Psi\right)\left(\omega,g\right)=\left(\sigma\left(\omega\right),g\Psi\left(\omega_{1}\right)\right),\,\,\omega=\left(\omega_{1},\omega_{2},\dots\right)\in\Sigma,\,\, g\in G,
\]
is called a \emph{group-extended Markov system} (see Section \ref{sub:Symbolic-Dynamics}
for details on Markov shifts). Let $\varphi:\Sigma\rightarrow\R$
denote a Hölder continuous potential, which extends to the potential
$\varphi\circ\pi_{1}:\Sigma\times G\rightarrow\R$, where $\pi_{1}:\Sigma\times G\rightarrow\Sigma$
denotes the canonical projection on $\Sigma$. The \emph{Perron-Frobenius
operator} (\cite{MR0234697,bowenequilibriumMR0442989}) is given by
$\mathcal{L}_{\varphi\circ\pi_{1}}\left(f\right)\left(x\right):=\sum_{\left(\sigma\rtimes\Psi\right)\left(y\right)=x}\e^{\varphi\circ\pi_{1}\left(y\right)}f\left(y\right)$.
The potential $\varphi\circ\pi_{1}$ is called \emph{recurrent} (\cite{MR1738951,MR1818392})
if there exists $\rho>0$ and a conservative measure $\nu$ such that
$\mathcal{L}_{\varphi\circ\pi_{1}}^{*}\left(\nu\right)=\rho\nu$.
Here, the measure is a $\sigma$-finite Borel measure on $\Sigma\times G$,
and $\mathcal{L}_{\varphi\circ\pi_{1}}^{*}\left(\nu\right)=\rho\nu$
means that, for every $f\in L^{1}\left(\nu\right)$, we have $\nu\left(\mathcal{L}_{\varphi\circ\pi_{1}}\left(f\right)\right)=\rho\nu\left(f\right)$.
In particular, the sum in the definition of $\mathcal{L}_{\varphi\circ\pi_{1}}\left(f\right)$
converges absolutely for $\nu$-almost every $x$. If $\varphi\circ\pi_{1}$
is recurrent, then the logarithm of $\rho$ is given by the \emph{Gurevi\v{c}
pressure} $\mathcal{P}\left(\varphi\circ\pi_{1},\sigma\rtimes\Psi\right)$
of $\varphi\circ\pi_{1}$ with respect to $\sigma\rtimes\Psi$. Moreover,
there exists a continuous function $h:\Sigma\times G\rightarrow\R^{+}$
such that $\mathcal{L}_{\varphi\circ\pi_{1}}\left(h\right)=\rho h$,
and the measure $h\, d\nu$ is the equilibrium measure of $\varphi\circ\pi_{1}$.
For details on these results of Sarig (\cite{MR1818392}), we refer
to Section \ref{sub:Recurrent-potentials}. That the equilibrium measure
is ergodic follows from work of Aaronson, Denker and Urba\'{n}ski
(\cite{MR1107025}). 

For a finitely primitive Markov shift $\Sigma$ (see Definition \ref{mixing-definitions})
and a Hölder continuous potential $f:\Sigma\rightarrow\R$ with finite
Gurevi\v{c} pressure, there exists a unique $\sigma$-invariant Gibbs
measure for $f$, which is denoted by $\mu_{f}$ (see Theorem \ref{thm:existence-of-gibbs-measures}).
Throughout, we let $\pi_{2}:\Sigma\times G\rightarrow G$ denote the
projection on $G$ and let $\lambda$ denote the Haar measure on $G$. 

We are now in the position to state our main result, which characterises
when $\varphi\circ\pi_{1}$ is recurrent.
\begin{thm}
\label{thm:main-theorem}Let $\Sigma$ be finitely primitive and let
$\left(\Sigma\times G,\sigma\rtimes\Psi\right)$ be a topologically
mixing group-extended Markov system. Let $\varphi:\Sigma\rightarrow\R$
be Hölder continuous with $\mathcal{P}\left(\varphi\circ\pi_{1},\sigma\rtimes\Psi\right)<\infty$.
Then $\varphi\circ\pi_{1}$ is recurrent if and only if there exists
a (unique) group homomorphism $c:G\rightarrow\left(\R^{+},\cdot\right)$
such that $\mathcal{P}\left(\varphi_{c},\sigma\right)<\infty$ and
\linebreak $\mu_{\varphi_{c}}\times\lambda$ is conservative, where
\textup{$\varphi_{c}$ }\textup{\emph{is given by}} $\varphi_{c}\left(x\right):=\varphi\left(x\right)-\log c\left(\Psi\left(x_{1}\right)\right)$
for $x=\left(x_{1},x_{2},\dots\right)\in\Sigma$. Moreover, if $\varphi\circ\pi_{1}$
is recurrent, then the following four statements hold. 
\begin{enumerate}
\item \textup{$\mathcal{P}\left(\varphi\circ\pi_{1},\sigma\rtimes\Psi\right)=\mathcal{P}\left(\varphi_{c},\sigma\right).$\label{enu:pressure-relation}}
\item The unique (up to a constant multiple) continuous function $h:\Sigma\times G\rightarrow\R^{+}$,
which is fixed by $\e^{-\mathcal{P}\left(\varphi\circ\pi_{1},\sigma\rtimes\Psi\right)}\mathcal{L}_{\varphi\circ\pi_{1}}$
and which is bounded on cylindrical sets, is given by $h=\left(h_{1}\circ\pi_{1}\right)\left(c\circ\pi_{2}\right)$,
where $h_{1}:\Sigma\rightarrow\R^{+}$ is the unique Hölder continuous
fixed point of $\e^{-\mathcal{P}\left(\varphi_{c},\sigma\right)}\mathcal{L}_{\varphi_{c}}$.\label{enu:eigenfunction}
\item The unique (up to a constant multiple) measure $\nu$ on $\Sigma\times G$,
which is fixed by \textup{$\e^{-\mathcal{P}\left(\varphi\circ\pi_{1},\sigma\rtimes\Psi\right)}\mathcal{L}_{\varphi\circ\pi_{1}}^{*}$
}and which is positive and finite on cylindrical sets, is given by
$\nu=\left(c\circ\pi_{2}\right)^{-1}\, d\left(\nu_{1}\times\lambda\right)$,
where $\nu_{1}$ is the unique probability measure fixed by $\e^{-\mathcal{P}\left(\varphi_{c},\sigma\right)}\mathcal{L}_{\varphi_{c}}^{*}.$\label{enu:conformal-measure}
\item $\mu_{\varphi_{c}}\times\lambda$ is the equilibrium measure of $\varphi\circ\pi_{1}$.
\label{enu:equilibrium-measure}
\end{enumerate}
\end{thm}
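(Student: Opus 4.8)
The plan is to exploit the $G$-symmetry of the extended system together with Sarig's uniqueness theory for recurrent potentials. The whole argument rests on one structural observation: the left translations $\tau_{g_0}\colon\Sigma\times G\to\Sigma\times G$, $\tau_{g_0}(\omega,g)=(\omega,g_0g)$, commute with $\sigma\rtimes\Psi$ and leave $\varphi\circ\pi_1$ invariant. A direct computation then gives the intertwining relation $\mathcal{L}_{\varphi\circ\pi_1}(f\circ\tau_{g_0})=\mathcal{L}_{\varphi\circ\pi_1}(f)\circ\tau_{g_0}$, so that $\tau_{g_0}$ carries eigenfunctions to eigenfunctions and conformal measures to conformal measures, in each case preserving the eigenvalue. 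I would combine this symmetry with the uniqueness statements from Sarig's recurrence theory, applied to the topologically mixing Markov system $\Sigma\times G$, to force the product structure asserted in \ref{enu:eigenfunction}--\ref{enu:conformal-measure}, and to read off the homomorphism $c$ from the way $h$ transforms under $\tau_{g_0}$.

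I would first dispatch the ``if'' direction together with \ref{enu:pressure-relation}--\ref{enu:equilibrium-measure}, since these follow from an explicit ansatz. Given a homomorphism $c$ with $\mathcal{P}(\varphi_c,\sigma)=:P<\infty$, finite primitivity and Theorem \ref{thm:existence-of-gibbs-measures} supply the H\"older eigenfunction $h_1$ and the conformal probability measure $\nu_1$ of $\e^{-P}\mathcal{L}_{\varphi_c}$, with Gibbs measure $d\mu_{\varphi_c}=h_1\,d\nu_1$. Setting $h:=(h_1\circ\pi_1)(c\circ\pi_2)$ and using multiplicativity of $c$, i.e. $c(g\Psi(y_1)^{-1})=c(g)\,c(\Psi(y_1))^{-1}$, the sum defining $\mathcal{L}_{\varphi\circ\pi_1}(h)$ factors as $c(g)\,\mathcal{L}_{\varphi_c}(h_1)(\omega)$, whence $\mathcal{L}_{\varphi\circ\pi_1}(h)=\e^{P}h$; the analogous change of variables $g\mapsto g\Psi(y_1)^{-1}$ in the integral against $\nu:=(c\circ\pi_2)^{-1}\,d(\nu_1\times\lambda)$ reduces $\nu(\mathcal{L}_{\varphi\circ\pi_1}f)$ to $\nu_1(\mathcal{L}_{\varphi_c}F)$ with $F(y)=\int_G c(g)^{-1}f(y,g)\,d\lambda(g)$, giving $\mathcal{L}_{\varphi\circ\pi_1}^{*}\nu=\e^{P}\nu$. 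Then $h\,d\nu=(h_1\circ\pi_1)\,d(\nu_1\times\lambda)=\mu_{\varphi_c}\times\lambda$, and since this measure is equivalent to $\nu$ (as $0<h<\infty$) and conservativity is invariant under passage to an equivalent measure, the hypothesised conservativity of $\mu_{\varphi_c}\times\lambda$ gives conservativity of $\nu$. Hence $\varphi\circ\pi_1$ is recurrent with $\rho=\e^{P}$, which simultaneously yields \ref{enu:pressure-relation}, and \ref{enu:eigenfunction}--\ref{enu:equilibrium-measure} once the uniqueness clauses are quoted from Sarig.

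For the ``only if'' direction I would start from the conformal measure $\nu$ and the continuous eigenfunction $h>0$, bounded on cylinders, provided by recurrence. By the intertwining relation, $h\circ\tau_{g_0}$ is again such an eigenfunction, so uniqueness up to a constant forces $h\circ\tau_{g_0}=c(g_0)\,h$ for a scalar $c(g_0)>0$; since $\tau_{g_0g_1}=\tau_{g_0}\circ\tau_{g_1}$, comparing $h\circ\tau_{g_0g_1}$ with $(h\circ\tau_{g_0})\circ\tau_{g_1}$ shows $c$ is a homomorphism into $(\R^{+},\cdot)$, and evaluating at the identity gives $h(\omega,g)=h_1(\omega)c(g)$ with $h_1:=h(\cdot,e)$. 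Restricting the eigenequation to $g=e$ collapses it to $\mathcal{L}_{\varphi_c}(h_1)=\rho h_1$; since $\varphi_c$ is H\"older (the correction $\log c(\Psi(\cdot_1))$ is locally constant) and $h_1$ is positive and bounded on cylinders, the base theory identifies $\rho=\e^{\mathcal{P}(\varphi_c,\sigma)}$ and, as $\rho=\e^{\mathcal{P}(\varphi\circ\pi_1,\sigma\rtimes\Psi)}<\infty$ by hypothesis, forces $\mathcal{P}(\varphi_c,\sigma)<\infty$, so that $\nu_1$ and $\mu_{\varphi_c}$ exist. The easy-direction computation shows $(c\circ\pi_2)^{-1}\,d(\nu_1\times\lambda)$ is a conformal measure with the same eigenvalue, so uniqueness identifies it with $\nu$, while uniqueness of $c$ follows because $c$ is determined by the quotient $(h\circ\tau_{g_0})/h$. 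Finally $\mu_{\varphi_c}\times\lambda=h\,d\nu$ is equivalent to the conservative measure $\nu$ and hence conservative, completing the characterisation.

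The main obstacle is the ``only if'' direction, and within it the legitimacy of the uniqueness step: I must ensure that $h\circ\tau_{g_0}$ genuinely lies in the class in which Sarig's uniqueness holds (positive, continuous, bounded on cylinders) so that the scalar $c(g_0)$ is well defined, and I must verify the regularity of $h_1$ and the finiteness of $\mathcal{P}(\varphi_c,\sigma)$ before invoking the base Ruelle--Perron--Frobenius theory for $\varphi_c$. The transfer of conservativity between the equivalent measures $\nu$ and $\mu_{\varphi_c}\times\lambda$, though standard in infinite ergodic theory, is the other point that needs to be stated with care, as it is the only place where the conservativity hypothesis enters in each direction.
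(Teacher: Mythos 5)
Your overall strategy coincides with the paper's: the central device---that the left translations $\tau_{g_0}$ commute with $\sigma\rtimes\Psi$, intertwine with $\mathcal{L}_{\varphi\circ\pi_1}$, and hence, by Sarig's uniqueness of the eigenfunction, produce the homomorphism $c$ and the factorisation $h=(h_1\circ\pi_1)(c\circ\pi_2)$---is exactly the paper's Lemma \ref{lem:existence-homomorphism}, and your ``if'' direction and uniqueness argument for $c$ match the paper's computations (the paper merely routes them through the cohomologous potential $\tilde{\varphi}:=\varphi_c+\log h_1-\log h_1\circ\sigma$ rather than through $h$ and $\nu$ directly).

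There is, however, one step you flag as an obstacle but do not close, and it is precisely where the paper does nontrivial work: the claim that ``the base theory identifies $\rho=\e^{\mathcal{P}(\varphi_c,\sigma)}$'' from the relation $\mathcal{L}_{\varphi_c}(h_1)=\rho h_1$. This is not automatic. First, the Ruelle--Perron--Frobenius machinery for the finitely primitive base requires $\mathcal{P}(\varphi_c,\sigma)<\infty$ as an input, whereas you derive finiteness only \emph{after} identifying the eigenvalue, so as written the logic is circular. Second, even granting finiteness, a positive continuous eigenfunction bounded on cylinders need not a priori belong to the eigenvalue $\e^{\mathcal{P}}$: in Vere-Jones' $R$-theory, transient kernels admit positive $\lambda$-harmonic functions for a whole range of $\lambda$, and Sarig's uniqueness statement (Theorem \ref{thm:sarig-recurrence-characterisation}) is uniqueness \emph{within} the eigenvalue $\e^{\mathcal{P}(f,\sigma)}$, not uniqueness of the eigenvalue. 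The paper resolves this in two moves: it first normalises to $\tilde{\varphi}$ so that $\mathcal{L}_{\tilde{\varphi}}(\1)=\e^{\mathcal{P}(\varphi\circ\pi_1,\sigma\rtimes\Psi)}\1$, which yields $\mathcal{P}(\tilde{\varphi},\sigma)\le\mathcal{P}(\varphi\circ\pi_1,\sigma\rtimes\Psi)<\infty$ (and $\mathcal{P}(\tilde{\varphi},\sigma)=\mathcal{P}(\varphi_c,\sigma)$ since the coboundary vanishes on periodic orbits); it then invokes the uniform convergence of $\e^{-n\mathcal{P}(\tilde{\varphi},\sigma)}\mathcal{L}_{\tilde{\varphi}}^{n}(\1)$ to a bounded positive limit (\cite[Theorem 2.4.6]{MR2003772}), which is compatible with $\mathcal{L}_{\tilde{\varphi}}^{n}(\1)=\e^{n\mathcal{P}(\varphi\circ\pi_1,\sigma\rtimes\Psi)}\1$ only if the two pressures agree. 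You need this argument (or an equivalent one, e.g.\ integrating the eigenequation against $\nu_1$ after establishing $h_1\in L^{1}(\nu_1)$) to make the ``only if'' direction, and with it assertion (\ref{enu:pressure-relation}), complete; the remaining points of your proposal are sound.
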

The first assertion of the following corollary is a generalisation
of a result of Kesten, who points out that, for an irreducible recurrent
random walks on a countable group, the group is necessarily amenable
(\cite[p.73]{MR0214137}). In order to state the second assertion
of the corollary, recall that $\varphi\circ\pi_{1}$ is said to be
\emph{positive recurrent} if the equilibrium measure is a finite measure
(see Definition \ref{definition-recurrence-pos-null} and Theorem
\ref{thm:sarig-recurrence-characterisation}). 
\begin{cor}
\label{cor:recurrence-implies-amenable}Under the hypotheses  of Theorem
\ref{thm:main-theorem}, we have the following.
\begin{enumerate}
\item \label{enu:recurrent-implies-amenable}If $\varphi\circ\pi_{1}$ is
recurrent, then $G$ is amenable. 
\item \label{enu:positive-recurrence-finite}$\varphi\circ\pi_{1}$ is positive
recurrent $\quad\Longleftrightarrow\quad$ $G$ is finite $\quad\Longleftrightarrow\quad$
$\left(\Sigma\times G,\sigma\rtimes\Psi\right)$ is finitely primitive. 
\end{enumerate}
\end{cor}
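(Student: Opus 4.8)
The plan is to derive both assertions directly from the conclusions of Theorem~\ref{thm:main-theorem}, reducing assertion~(\ref{enu:recurrent-implies-amenable}) to a Kesten-type passage from recurrence to amenability, and assertion~(\ref{enu:positive-recurrence-finite}) to a finiteness computation for the equilibrium measure together with a combinatorial analysis of finite primitivity.

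For~(\ref{enu:recurrent-implies-amenable}), I would begin with the homomorphism $c\colon G\to(\R^+,\cdot)$ provided by recurrence, and record the pressure identity $\mathcal{P}(\varphi\circ\pi_1,\sigma\rtimes\Psi)=\mathcal{P}(\varphi_c,\sigma)$ from assertion~(\ref{enu:pressure-relation}). The key observation is that the Gurevič pressure of the extension is computed by periodic orbits of $\sigma\rtimes\Psi$ in a fixed cylinder $[(a,e)]$, i.e.\ by base orbits $\sigma^n x=x$ with $\Psi(x_1\cdots x_n)=e$; on exactly these orbits the Birkhoff sums of $\varphi$ and of $\varphi_c$ coincide, because $c(\Psi(x_1\cdots x_n))=c(e)=1$. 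Hence the extension partition functions form a subfamily of those defining $\mathcal{P}(\varphi_c,\sigma)$, giving the a priori inequality $\mathcal{P}(\varphi\circ\pi_1,\sigma\rtimes\Psi)\le\mathcal{P}(\varphi_c,\sigma)$, and the equality in~(\ref{enu:pressure-relation}) means that restricting to returns to the identity costs no exponential decay. This is precisely the spectral-radius-one condition that, by Kesten's characterisation, forces $G$ to be amenable; equivalently, conservativity of $\mu_{\varphi_c}\times\lambda$ makes the associated Green-type series over identity-returns of $\e^{-\mathcal{P}(\varphi_c,\sigma)}\mathcal{L}_{\varphi_c}$ diverge, again yielding spectral radius one and amenability.

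For~(\ref{enu:positive-recurrence-finite}) I would argue in a cycle. If $\varphi\circ\pi_1$ is positive recurrent, then its equilibrium measure is finite, and by assertion~(\ref{enu:equilibrium-measure}) this measure is $\mu_{\varphi_c}\times\lambda$; since $\mu_{\varphi_c}$ is a probability measure and $\lambda$ is the counting measure on the discrete group $G$, finiteness is equivalent to $\lambda(G)=\card(G)<\infty$, so $G$ is finite. Next, if $G$ is finite I would lift a finite-primitivity witness of $\Sigma$ to the extension: combining a finite connecting set of words for $\Sigma$ with the finitely many displacements $g^{-1}h\in G$ produces a finite connecting set for the states $(a,g)$, so $(\Sigma\times G,\sigma\rtimes\Psi)$ is finitely primitive. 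Finally, if the extension is finitely primitive, then since $\mathcal{P}(\varphi\circ\pi_1,\sigma\rtimes\Psi)<\infty$ Theorem~\ref{thm:existence-of-gibbs-measures} furnishes an invariant Gibbs probability measure, which is the finite equilibrium measure, so $\varphi\circ\pi_1$ is positive recurrent. This closes the cycle and proves all three equivalences; note that the reverse combinatorial direction is also transparent, since along any connection joining $(a,g)$ to $(b,h)$ the group coordinate changes by $\Psi$ of the connecting word, so a finite connecting set realises only finitely many displacements $g^{-1}h$, which by topological mixing forces $G$ to be finite.

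The main obstacle is the amenability step in~(\ref{enu:recurrent-implies-amenable}): once Theorem~\ref{thm:main-theorem} is in hand, everything in~(\ref{enu:positive-recurrence-finite}) is measure-theoretic and combinatorial bookkeeping, whereas converting the equality of Gurevič pressures (or the conservativity of $\mu_{\varphi_c}\times\lambda$) into amenability of $G$ requires the full strength of a Kesten-type spectral-radius criterion for the group-extended transfer operator.
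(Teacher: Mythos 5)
Your handling of assertion (\ref{enu:positive-recurrence-finite}) coincides with the paper's: finiteness of the equilibrium measure $\mu_{\varphi_{c}}\times\lambda$ forces $\card\left(G\right)<\infty$, finiteness of $G$ gives finite primitivity of the extension and hence positive recurrence, and a finite connecting set in $\Sigma\times G$ can only realise finitely many group displacements; you merely supply details the paper declares ``straightforward and therefore omitted'', and for the step from finite primitivity to positive recurrence the clean reference is Sarig's result quoted by the paper ([Sar03, Corollary 2]) rather than a detour through Theorem \ref{thm:existence-of-gibbs-measures}.

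For assertion (\ref{enu:recurrent-implies-amenable}) you take a genuinely different route. The paper uses that the equilibrium measure $\mu_{\varphi_{c}}\times\lambda$ is an \emph{ergodic} product measure and invokes Zimmer's theorems on amenable ergodic actions (the base is an amenable $\Z$-space, and ergodicity of the skew product then forces $G$ to be amenable). You instead observe that on periodic orbits of $\sigma\rtimes\Psi$ the Birkhoff sums of $\varphi$ and $\varphi_{c}$ agree, so that $\mathcal{P}\left(\varphi_{c}\circ\pi_{1},\sigma\rtimes\Psi\right)=\mathcal{P}\left(\varphi\circ\pi_{1},\sigma\rtimes\Psi\right)=\mathcal{P}\left(\varphi_{c},\sigma\right)$ by Theorem \ref{thm:main-theorem} (\ref{enu:pressure-relation}), and you deduce amenability from a Kesten-type pressure-gap criterion. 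This works, but two points must be made precise. First, the criterion has to be applied to $\varphi_{c}$, not to $\varphi$: for a recurrent potential with non-trivial $c$ one has $\mathcal{P}\left(\varphi\circ\pi_{1},\sigma\rtimes\Psi\right)<\mathcal{P}\left(\varphi,\sigma\right)$ (cf.\ Example \ref{recurrent-non-symmetric-example}), so the ``no gap'' condition only holds after twisting by $c$. Second, the classical Kesten criterion for random walks is not directly applicable; the result you actually need is Theorem \ref{thm:stadlbauer-nogap-implies-amenable} (Stadlbauer's extension of Kesten's criterion to group-extended Markov systems, which needs no symmetry hypothesis), applied to the H\"older potential $\varphi_{c}$ with $\mathcal{P}\left(\varphi_{c},\sigma\right)<\infty$. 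With that citation your argument closes. The trade-off is that your route imports the full strength of the Kesten--Stadlbauer machinery, whereas the paper's argument needs only the product structure and ergodicity of the equilibrium measure together with Zimmer's comparatively soft amenability theory, and is therefore logically independent of the pressure-gap theorems discussed later in the paper.
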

\begin{proof}
We start with the proof of (\ref{enu:recurrent-implies-amenable}).
By Theorem \ref{thm:main-theorem} (\ref{enu:equilibrium-measure}),
the ergodic equilibrium measure of $\varphi\circ\pi_{1}$ is the product
measure $\mu_{\varphi_{c}}\times\lambda$. Since the natural extension
of $\left(\Sigma,\sigma,\mu_{\varphi_{c}}\right)$ is an ergodic $\Z$-space,
it follows from \cite[Theorem 2.1]{MR0473096} that $\left(\Sigma,\sigma,\mu_{\varphi_{c}}\right)$
is an amenable $\Z$-space (see Zimmer \cite{MR0473096} for the definition).
Now, the ergodicity of $\mu_{\varphi_{c}}\times\lambda$ implies that
$G$ is amenable by \cite[Theorem 3.1]{MR0473096} (see also \cite{MR2104586}). 

Let us now prove (\ref{enu:positive-recurrence-finite}). If $\varphi\circ\pi_{1}$
is positive recurrent, then the equilibrium measure of $\varphi\circ\pi_{1}$
is finite. Hence, by Theorem \ref{thm:main-theorem} (\ref{enu:equilibrium-measure}),
the measure $\mu_{\varphi_{c}}\times\lambda$ is finite, which implies
that $G$ is finite. On the other hand, if $G$ is finite, then $\left(\Sigma\times G,\sigma\rtimes\Psi\right)$
is finitely primitive, which implies that $\varphi\circ\pi_{1}$ is
positive recurrent (see \cite[Corollary 2]{MR1955261}). The remaining
implication, that finite primitivity of $\left(\Sigma\times G,\sigma\rtimes\Psi\right)$
implies the finiteness of $G$, is straightforward and therefore omitted.\end{proof}
\begin{rem*}
By Theorem \ref{thm:main-theorem} (\ref{enu:equilibrium-measure}),
the equilibrium measure of a recurrent potential $\varphi\circ\pi_{1}$
satisfies the Gibbs property (\ref{eq:gibbs-equation}) with respect
to the potential $\varphi_{c}\circ\pi_{1}$.
\end{rem*}
As a further corollary of our main theorem, we obtain the following
equivalent conditions for the base pressure and the skew product pressure
of a recurrent potential $\varphi\circ\pi_{1}$ to coincide. 
\begin{cor}
\label{cor:base-skewproduct-coincide}Under the hypotheses of Theorem
\ref{thm:main-theorem}, suppose that $\varphi\circ\pi_{1}$ is recurrent.
Then the following statements are equivalent. 
\begin{enumerate}
\item \label{enu:c-is-trivial}The group homomorphism $c:G\rightarrow\left(\R^{+},\cdot\right)$
given by Theorem \ref{thm:main-theorem} is trivial. 
\item \label{enu:full-pressure}$\mathcal{P}\left(\varphi\circ\pi_{1},\sigma\rtimes\Psi\right)=\mathcal{P}\left(\varphi,\sigma\right)$.
\item \label{enu:eigenfunction-is-product} The function $h:\Sigma\times G\rightarrow\R^{+}$,
given by Theorem \ref{thm:main-theorem} (\ref{enu:eigenfunction}),
is independent of the second coordinate. 
\item \label{enu:eigenmeasure-is-product}The measure $\nu$ on $\Sigma\times G$,
given by Theorem \ref{thm:main-theorem} (\ref{enu:conformal-measure}),
is a product measure.
\item \label{enu:equilibrium-is-product}$\mu_{\varphi}\times\lambda$ is
the equilibrium measure of $\varphi\circ\pi_{1}$.  
\end{enumerate}
\end{cor}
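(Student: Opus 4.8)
The plan is to read off all four ``coincidence'' conclusions directly from the explicit descriptions of the pressures, the eigenfunction $h$, the eigenmeasure $\nu$ and the equilibrium measure furnished by Theorem \ref{thm:main-theorem}, using throughout the elementary observation that a group homomorphism $c\colon G\to(\R^{+},\cdot)$ is trivial if and only if it is constant in $g\in G$ (since $c(e)=1$), equivalently if and only if $\varphi_{c}=\varphi$. First I would prove that (1) implies each of (2)--(5) by substituting $c\equiv 1$: Theorem \ref{thm:main-theorem}\,(\ref{enu:pressure-relation}) gives $\mathcal{P}(\varphi\circ\pi_{1},\sigma\rtimes\Psi)=\mathcal{P}(\varphi_{c},\sigma)=\mathcal{P}(\varphi,\sigma)$, which is (2); the formula $h=(h_{1}\circ\pi_{1})(c\circ\pi_{2})$ of (\ref{enu:eigenfunction}) collapses to $h=h_{1}\circ\pi_{1}$, giving (3); the formula $\nu=(c\circ\pi_{2})^{-1}\,d(\nu_{1}\times\lambda)$ of (\ref{enu:conformal-measure}) collapses to the product $\nu=\nu_{1}\times\lambda$, giving (4); and (\ref{enu:equilibrium-measure}) identifies the equilibrium measure as $\mu_{\varphi_{c}}\times\lambda=\mu_{\varphi}\times\lambda$, giving (5).

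For the converses, (3) and (4) are immediate from the same two formulas. If $h=(h_{1}\circ\pi_{1})(c\circ\pi_{2})$ does not depend on the second coordinate, then, since $h_{1}>0$, the factor $g\mapsto c(g)$ is constant, whence $c\equiv c(e)=1$. Similarly, the eigenmeasure equals $\nu=(c\circ\pi_{2})^{-1}\,d(\nu_{1}\times\lambda)=\nu_{1}\times(c^{-1}\lambda)$; if this is a product measure with the Haar measure $\lambda$ on the second factor, then $c^{-1}\lambda$ is proportional to $\lambda$, which (as $G$ is countable, so $\lambda$ is counting measure) forces $c$ to be constant, i.e.\ $c\equiv 1$. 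Both arguments use nothing beyond the explicit eigenobjects of Theorem \ref{thm:main-theorem} and the rigidity of homomorphisms.

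The two remaining implications (2)$\Rightarrow$(1) and (5)$\Rightarrow$(1) I would route through the uniqueness clause of Theorem \ref{thm:main-theorem}: it suffices to check that the trivial homomorphism $c_{0}\equiv 1$ itself satisfies the two defining conditions $\mathcal{P}(\varphi_{c_{0}},\sigma)=\mathcal{P}(\varphi,\sigma)<\infty$ and ``$\mu_{\varphi}\times\lambda$ conservative'', for then uniqueness of $c$ forces $c=c_{0}$. Finiteness of $\mathcal{P}(\varphi,\sigma)$ holds in both cases (it is assumed in (2), and is implicit in (5), where $\mu_{\varphi}$ is referenced and hence exists by Theorem \ref{thm:existence-of-gibbs-measures}). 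Under (5) conservativity is free: the equilibrium measure of a recurrent potential $\varphi\circ\pi_{1}$ is conservative, being of the form $h\,d\nu$ with $h>0$ and $\nu$ conservative, and by hypothesis it equals $\mu_{\varphi}\times\lambda$.

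The delicate point, and the main obstacle, is conservativity of $\mu_{\varphi}\times\lambda$ under (2). Here I would invoke the recurrence criterion underlying Theorem \ref{thm:main-theorem}, namely that for H\"older $f$ the product $\mu_{f}\times\lambda$ is conservative exactly when $\sum_{n}\e^{-n\mathcal{P}(f,\sigma)}\,Z_{n}^{e}(f)$ diverges, where $Z_{n}^{e}(f)$ denotes the sum of $\e^{S_{n}f}$ over the $n$-periodic orbits (with a fixed initial symbol) whose $\Psi$-value is the identity $e\in G$. The crucial cancellation is that on such orbits the correction term vanishes, since $S_{n}\varphi_{c}=S_{n}\varphi-\log c(\Psi(x_{1}\cdots x_{n}))=S_{n}\varphi-\log c(e)=S_{n}\varphi$; hence $Z_{n}^{e}(\varphi_{c})=Z_{n}^{e}(\varphi)$. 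Recurrence of $\varphi\circ\pi_{1}$ (via $c$) means $\mu_{\varphi_{c}}\times\lambda$ is conservative, so the series with $f=\varphi_{c}$ diverges; and since (2) together with Theorem \ref{thm:main-theorem}\,(\ref{enu:pressure-relation}) gives $\mathcal{P}(\varphi,\sigma)=\mathcal{P}(\varphi_{c},\sigma)$, the normalising exponent is unchanged, so the series with $f=\varphi$ diverges as well. Thus $\mu_{\varphi}\times\lambda$ is conservative, the trivial homomorphism $c_{0}$ meets both defining conditions, and uniqueness of $c$ completes the argument.
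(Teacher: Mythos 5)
Your proposal is correct, and for four of the five equivalences it follows essentially the paper's own route: (\ref{enu:c-is-trivial}) implies each of the others by substituting $c\equiv1$ into the explicit formulas of Theorem \ref{thm:main-theorem}; (\ref{enu:eigenfunction-is-product})$\Rightarrow$(\ref{enu:c-is-trivial}) and (\ref{enu:eigenmeasure-is-product})$\Rightarrow$(\ref{enu:c-is-trivial}) are read off from those formulas (you rightly read item (\ref{enu:eigenmeasure-is-product}) as ``product with the Haar measure $\lambda$ in the second factor'', which is necessary since $\left(c\circ\pi_{2}\right)^{-1}d\left(\nu_{1}\times\lambda\right)=\nu_{1}\times\left(c^{-1}\lambda\right)$ is literally always a product measure); and (\ref{enu:equilibrium-is-product})$\Rightarrow$(\ref{enu:c-is-trivial}) goes through conservativity of the equilibrium measure and the uniqueness of $c$, exactly as in the paper. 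The genuine difference is (\ref{enu:full-pressure})$\Rightarrow$(\ref{enu:c-is-trivial}). The paper's argument is shorter: under (\ref{enu:full-pressure}), the function $h_{1}\circ\pi_{1}$, with $h_{1}$ the bounded H\"older eigenfunction of $\e^{-\mathcal{P}\left(\varphi,\sigma\right)}\mathcal{L}_{\varphi}$ on the base, is a positive continuous eigenfunction of $\e^{-\mathcal{P}\left(\varphi\circ\pi_{1},\sigma\rtimes\Psi\right)}\mathcal{L}_{\varphi\circ\pi_{1}}$ bounded on cylindrical sets, so the uniqueness of $h$ in Theorem \ref{thm:sarig-recurrence-characterisation} gives $h=h_{1}\circ\pi_{1}$, i.e.\ (\ref{enu:eigenfunction-is-product}), and then (\ref{enu:c-is-trivial}). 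You instead establish conservativity of $\mu_{\varphi}\times\lambda$ and invoke the uniqueness of $c$. Your cancellation $Z_{n}^{e}\left(\varphi_{c}\right)=Z_{n}^{e}\left(\varphi\right)$ on identity-fibre periodic orbits and the matching of the normalising exponents under (\ref{enu:full-pressure}) are both correct, but the ``criterion'' you invoke --- conservativity of $\mu_{f}\times\lambda$ iff divergence of $\sum_{n}\e^{-n\mathcal{P}\left(f,\sigma\right)}Z_{n}^{e}\left(f\right)$ --- is nowhere stated in the paper, and its harder direction (divergence implies conservativity) costs the same Sarig input you are trying to avoid: one must pass to $\tilde{f}:=f+\log h_{1}-\log h_{1}\circ\sigma$, note $\mathcal{L}_{\tilde{f}\circ\pi_{1}}^{*}\left(\mu_{f}\times\lambda\right)=\e^{\mathcal{P}\left(f,\sigma\right)}\left(\mu_{f}\times\lambda\right)$, check that the divergence forces $\mathcal{P}\left(f\circ\pi_{1},\sigma\rtimes\Psi\right)=\mathcal{P}\left(f,\sigma\right)$ so that $\tilde{f}\circ\pi_{1}$ is recurrent at that eigenvalue, and then use the uniqueness of the conservative eigenmeasure in Theorem \ref{thm:sarig-recurrence-characterisation}. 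So your argument is valid and self-consistent, but it channels the same uniqueness principle through the eigenmeasure rather than the eigenfunction, at the price of an auxiliary lemma whose proof is comparable in length to the two-line eigenfunction argument the paper uses.
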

\begin{proof}
If $c=1$ then we have $\mathcal{P}\left(\varphi,\sigma\right)=\mathcal{P}\left(\varphi_{c},\sigma\right)=\mathcal{P}\left(\varphi\circ\pi_{1},\sigma\rtimes\Psi\right)$
by Theorem \ref{thm:main-theorem} (\ref{enu:pressure-relation}),
which proves that (\ref{enu:c-is-trivial}) implies (\ref{enu:full-pressure}).
Next, we prove that (\ref{enu:full-pressure}) implies (\ref{enu:eigenfunction-is-product}).
Since $\Sigma$ is finitely primitive and $\varphi$ is Hölder continuous,
it follows from \cite[Theorem 2.4.3]{MR2003772} that there exists
a bounded Hölder continuous function $h_{1}:\Sigma\rightarrow\R^{+}$,
which is fixed by $\e^{-\mathcal{P}\left(\varphi,\sigma\right)}\mathcal{L}_{\varphi}$.
Consequently, we have that 
\[
\e^{-\mathcal{P}\left(\varphi,\sigma\right)}\mathcal{L}_{\varphi\circ\pi_{1}}\left(h_{1}\circ\pi_{1}\right)=\e^{-\mathcal{P}\left(\varphi,\sigma\right)}\left(\mathcal{L}_{\varphi}\left(h_{1}\right)\right)\circ\pi_{1}=h_{1}\circ\pi_{1}.
\]
Since $\mathcal{P}\left(\varphi,\sigma\right)=\mathcal{P}\left(\varphi\circ\pi_{1},\sigma\rtimes\Psi\right)$
by (\ref{enu:full-pressure}), we have $h=h_{1}\circ\pi_{1}$, which
proves (\ref{enu:eigenfunction-is-product}). The proof that (\ref{enu:c-is-trivial})
is equivalent to (\ref{enu:eigenfunction-is-product}) follows from
Theorem \ref{thm:main-theorem} (\ref{enu:eigenfunction}). The equivalence
of (\ref{enu:conformal-measure}) and (\ref{enu:eigenmeasure-is-product})
follows from Theorem \ref{thm:main-theorem} (\ref{enu:eigenfunction})
and (\ref{enu:conformal-measure}). To finish the proof, we verify
that (\ref{enu:c-is-trivial}) and (\ref{enu:equilibrium-is-product})
are equivalent. If $c=1$ then $\mu_{\varphi}\times\lambda$ is the
equilibrium measure of $\varphi\circ\pi_{1}$ by Theorem \ref{thm:main-theorem}
(\ref{enu:equilibrium-measure}). On the other hand, if $\mu_{\varphi}\times\lambda$
is the equilibrium measure of $\varphi\circ\pi_{1}$, then we have
that $\mu_{\varphi}\times\lambda$ is conservative. Hence, that $c$
is trivial follows from the uniqueness of $c$ in Theorem \ref{thm:main-theorem}. 
\end{proof}
In order to state the next proposition, we give the following definition.
For $k\in\N$, let $\Sigma^{k}$ denote the set of admissible words
of length $k$, and for $\omega\in\Sigma^{k}$, let \foreignlanguage{english}{$\left[\omega\right]$
denote} the cylindrical set given by $\omega$. 
\begin{defn}
\label{def:symmetric-on-average}Let $\left(\Sigma\times G,\sigma\rtimes\Psi\right)$
be an irreducible group-extended Markov system. Let $\varphi:\Sigma\rightarrow\R$
be Hölder continuous with $\mathcal{P}\left(\varphi\circ\pi_{1},\sigma\rtimes\Psi\right)<\infty$.
We say that $\varphi$ is \emph{symmetric on average} \emph{with respect
to $\Psi$} if 
\[
\sup_{g\in G}\limsup_{n\rightarrow\infty}\frac{\sum_{k=1}^{n}\e^{-k\mathcal{P}\left(\varphi\circ\pi_{1},\sigma\rtimes\Psi\right)}\sum_{\omega\in\Sigma^{k}:\Psi\left(\omega\right)=g}\e^{\sup S_{k}\varphi_{|\left[\omega\right]}}}{\sum_{k=1}^{n}\e^{-k\mathcal{P}\left(\varphi\circ\pi_{1},\sigma\rtimes\Psi\right)}\sum_{\omega\in\Sigma^{k}:\Psi\left(\omega\right)=g^{-1}}\e^{\sup S_{k}\varphi_{|\left[\omega\right]}}}<\infty.
\]
\end{defn}
\begin{prop}
\label{prop:fullpressure-symmetric-on-average}Under the hypotheses
of Theorem \ref{thm:main-theorem}, suppose that $\varphi\circ\pi_{1}$
is recurrent. Then we have that $\varphi$ is symmetric on average
with respect to $\Psi$ if and only if $\mathcal{P}\left(\varphi\circ\pi_{1},\sigma\rtimes\Psi\right)=\mathcal{P}\left(\varphi,\sigma\right)$.\end{prop}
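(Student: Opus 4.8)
The plan is to reduce the symmetric-on-average condition to the triviality of the homomorphism $c$ supplied by Theorem \ref{thm:main-theorem}, and then to quote Corollary \ref{cor:base-skewproduct-coincide}. Write $P:=\mathcal{P}(\varphi\circ\pi_{1},\sigma\rtimes\Psi)$, set $T:=\sigma\rtimes\Psi$, and for $g\in G$, $n\in\N$ put
\[
Z_{n}(g):=\sum_{k=1}^{n}\e^{-kP}\sum_{\omega\in\Sigma^{k}:\Psi(\omega)=g}\e^{\sup S_{k}\varphi_{|[\omega]}},\qquad G_{n}(g):=\sum_{k=1}^{n}\mu_{\varphi_{c}}(\{x:\Psi(x_{1}\cdots x_{k})=g\}),
\]
so that $\varphi$ is symmetric on average if and only if $\sup_{g}\limsup_{n}Z_{n}(g)/Z_{n}(g^{-1})<\infty$. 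First I would record the cocycle identity $S_{k}\varphi(x)=S_{k}\varphi_{c}(x)+\log c(\Psi(x_{1}\cdots x_{k}))$, valid since $c$ is a homomorphism and $\Psi$ a semigroup homomorphism; on a cylinder $[\omega]$ with $\Psi(\omega)=g$ the correction term is the constant $\log c(g)$. Combining this with the Gibbs property of $\mu_{\varphi_{c}}$ (Theorem \ref{thm:existence-of-gibbs-measures}, noting $\mathcal{P}(\varphi_{c},\sigma)=P$ by Theorem \ref{thm:main-theorem}(\ref{enu:pressure-relation})) and with bounded distortion — whose constants are \emph{uniform} over all cylinders because $\Sigma$ is finitely primitive and $\varphi_{c}$ is Hölder — gives $\e^{-kP+\sup S_{k}\varphi_{|[\omega]}}\asymp c(g)\,\mu_{\varphi_{c}}([\omega])$ uniformly in $\omega$ with $\Psi(\omega)=g$. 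Summing over $\omega$ and $k$ yields $Z_{n}(g)\asymp c(g)\,G_{n}(g)$ with $g$-independent constants, hence, using $c(g^{-1})=c(g)^{-1}$,
\[
\frac{Z_{n}(g)}{Z_{n}(g^{-1})}\asymp c(g)^{2}\,\frac{G_{n}(g)}{G_{n}(g^{-1})}.
\]

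The heart of the argument is then to show that the conservative part is symmetric: for every fixed $g\in G$,
\[
\lim_{n\to\infty}\frac{G_{n}(g)}{G_{n}(g^{-1})}=1.
\]
Here I would use that, by Theorem \ref{thm:main-theorem}(\ref{enu:equilibrium-measure}), $m:=\mu_{\varphi_{c}}\times\lambda$ is a conservative, ergodic, $T$-invariant measure on the topologically mixing countable Markov shift $\Sigma\times G$. Writing $A:=\Sigma\times\{e\}$ and $B_{g}:=\Sigma\times\{g\}$, one has $\mu_{\varphi_{c}}(\{\Psi(x_{1}\cdots x_{k})=g\})=m(A\cap T^{-k}B_{g})$, and crucially $m(B_{g})=1$ for every $g$ because $\lambda$ is Haar. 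Since the system is recurrent and topologically mixing, the induced transitions on $G$ are recurrent and aperiodic, so a ratio limit theorem for recurrent potentials (in the spirit of Orey and Sarig) applies and gives $m(A\cap T^{-k}B_{g})/m(A\cap T^{-k}B_{e})\to m(B_{g})/m(B_{e})=1$; a Cesàro summation then yields $G_{n}(g)/G_{n}(e)\to1$, and likewise for $g^{-1}$, whence the displayed limit.

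Combining the two displays, $\limsup_{n}Z_{n}(g)/Z_{n}(g^{-1})\asymp c(g)^{2}$ for each $g$, so $\sup_{g}\limsup_{n}Z_{n}(g)/Z_{n}(g^{-1})$ is finite if and only if $\sup_{g}c(g)<\infty$. As $c:G\to(\R^{+},\cdot)$ is a homomorphism, a bounded image forces $c\equiv1$ (any value $c(g_{0})\neq1$ gives $c(g_{0}^{n})=c(g_{0})^{n}$ unbounded). Finally, $c\equiv1$ is equivalent to $\mathcal{P}(\varphi\circ\pi_{1},\sigma\rtimes\Psi)=\mathcal{P}(\varphi,\sigma)$ by the equivalence of (\ref{enu:c-is-trivial}) and (\ref{enu:full-pressure}) in Corollary \ref{cor:base-skewproduct-coincide}, which proves the proposition.

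The main obstacle is the symmetry $G_{n}(g)/G_{n}(g^{-1})\to1$. The individual terms $m(A\cap T^{-k}B_{g})$ and $m(A\cap T^{-k}B_{g^{-1}})$ are genuinely asymmetric — time reversal sends the base Gibbs measure to a reflected Gibbs measure that is typically singular to it, so no termwise comparison is available — and the symmetry surfaces only after conservative, ergodic averaging. This is exactly where conservativity of $\mu_{\varphi_{c}}\times\lambda$ is indispensable, and it is also the point where the interchange of the limit in $k$ with the summation over target symbols (and the passage from the pointwise ratio limit to the ratio of the integrated quantities $G_{n}$) must be carried out with care.
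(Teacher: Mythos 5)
Your overall architecture is the same as the paper's: reduce the symmetric-on-average condition to the triviality of the homomorphism $c$, using the cocycle identity $S_{k}\varphi=S_{k}\varphi_{c}+\log c(\Psi(\omega))$ on $[\omega]$, the Gibbs property of $\mu_{\varphi_{c}}$ at pressure $\mathcal{P}(\varphi_{c},\sigma)=\mathcal{P}(\varphi\circ\pi_{1},\sigma\rtimes\Psi)$, the fact that a homomorphism into $(\R^{+},\cdot)$ with bounded image is trivial, and Corollary \ref{cor:base-skewproduct-coincide} for the final equivalence. All of that is correct and matches the paper. The problem is the step you yourself identify as the heart of the argument, namely $G_{n}(g)/G_{n}(g^{-1})\rightarrow1$: as written, this step is not proved. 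You invoke a \emph{termwise} ratio limit theorem $m(A\cap T^{-k}B_{g})/m(A\cap T^{-k}B_{e})\rightarrow m(B_{g})/m(B_{e})$ as $k\rightarrow\infty$. No such result is available in this generality: for null recurrent potentials (and by Corollary \ref{cor:recurrence-implies-amenable} (\ref{enu:positive-recurrence-finite}) the potential $\varphi\circ\pi_{1}$ is null recurrent whenever $G$ is infinite, i.e.\ in every nontrivial case) termwise ratio or local limit theorems are delicate and in general false without further hypotheses; only the Ces\`aro (Hopf) version is robust. If instead you retreat to Hopf's ratio ergodic theorem, you get the almost-everywhere pointwise statement $\sum_{k\le n}\1_{B_{g}}\circ T^{k}\big/\sum_{k\le n}\1_{B_{g^{-1}}}\circ T^{k}\rightarrow1$, but $G_{n}(g)$ is the \emph{integral} over $A$ of the numerator, and a ratio of integrals of two a.e.\ divergent sequences whose pointwise ratio tends to $1$ need not tend to $1$. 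You flag this interchange as requiring care, but flagging it is not resolving it; this is precisely the missing idea.

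The paper closes exactly this gap by never integrating the ratio. It applies the ratio ergodic theorem to the transfer operator $\e^{-k\mathcal{P}(\varphi\circ\pi_{1},\sigma\rtimes\Psi)}h^{-1}\mathcal{L}_{\varphi\circ\pi_{1}}^{k}(h\,\cdot\,)$ of the conservative ergodic system $(\Sigma\times G,\mu_{\varphi_{c}}\times\lambda,\sigma\rtimes\Psi)$, obtaining the limit $1$ for the ratio of the Ces\`aro sums of $h^{-1}\mathcal{L}_{\varphi\circ\pi_{1}}^{k}(h\1_{\Sigma\times\{g\}})$ and $h^{-1}\mathcal{L}_{\varphi\circ\pi_{1}}^{k}(h\1_{\Sigma\times\{g^{-1}\}})$ at almost every point. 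Since $h=(h_{1}\circ\pi_{1})(c\circ\pi_{2})$ with $h_{1}$ bounded away from zero and infinity, stripping off $h$ costs exactly the factor $c(g)^{-2}$, and then finite primitivity (a finite set $F\subset I$ such that every admissible word can be continued by some $i\in F$) together with the bounded distortion property converts the values $\mathcal{L}_{\varphi\circ\pi_{1}}^{k}(\1_{\Sigma\times\{g\}})(x(i),\id)$ at finitely many reference points into two-sided bounds for $\sum_{\omega\in\Sigma^{k}:\Psi(\omega)=g^{-1}}\e^{\sup S_{k}\varphi_{|[\omega]}}$ with constants uniform in $k$ and $g$. This pointwise-plus-distortion route is what replaces your unproved termwise ratio limit and the unjustified passage from the a.e.\ ratio to the ratio of integrals; if you substitute it for your middle step, the rest of your argument goes through.
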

\begin{rem*}
For a finite group $G$, every homomorphism $c:G\rightarrow\left(\R^{+},\cdot\right)$
is trivial. Furthermore, every Hölder continuous potential $\varphi\circ\pi_{1}:\Sigma\rightarrow\R$
with finite Gurevi\v{c} pressure is positive recurrent by Corollary
\ref{cor:recurrence-implies-amenable} (\ref{enu:positive-recurrence-finite}).
Hence, by combining the statement that (\ref{enu:c-is-trivial}) implies
(\ref{enu:full-pressure}) from Corollary \ref{cor:base-skewproduct-coincide}
and Proposition \ref{prop:fullpressure-symmetric-on-average}, we
have that $\varphi$ is symmetric on average with respect to $\Psi$. 
\end{rem*}
Since it will be convenient for some of our applications to investigate
irreducible and not necessarily aperiodic group extensions, we make
the following remark. 
\begin{rem}
\label{irreducible-remark} The results of Corollary \ref{cor:recurrence-implies-amenable}
also hold if $\left(\Sigma\times G,\sigma\rtimes\Psi\right)$ is irreducible
with period $p>1$.  Moreover, regarding  Proposition \ref{prop:fullpressure-symmetric-on-average},
we have $\mathcal{P}\left(\varphi\circ\pi_{1},\sigma\rtimes\Psi\right)=\mathcal{P}\left(\varphi,\sigma\right)$
if and only if 
\[
\sup_{g\in G_{0}}\limsup_{n\rightarrow\infty}\frac{\sum_{k=1}^{n}\e^{-kp\mathcal{P}\left(\varphi\circ\pi_{1},\sigma\rtimes\Psi\right)}\sum_{\omega\in\Sigma^{kp}:\Psi\left(\omega\right)=g}\e^{\sup S_{kp}\varphi_{|\left[\omega\right]}}}{\sum_{k=1}^{n}\e^{-kp\mathcal{P}\left(\varphi\circ\pi_{1},\sigma\rtimes\Psi\right)}\sum_{\omega\in\Sigma^{kp}:\Psi\left(\omega\right)=g^{-1}}\e^{\sup S_{kp}\varphi_{|\left[\omega\right]}}}<\infty,
\]
where $G_{0}:=\bigcup_{n\in\N}\Psi\left(\Sigma^{np}\right)$ is a
subgroup of $G$. 
\end{rem}
In the next example, which illustrates Proposition \ref{prop:fullpressure-symmetric-on-average},
we consider asymmetric nearest neighbour random walks on the additive
integers $\left(\Z,+\right)$. 
\begin{example}
\label{recurrent-non-symmetric-example}For the alphabet $I:=\left\{ 1,-1\right\} $
we consider the full shift $\Sigma:=I^{\N}$ and the group-extended
Markov system given by the canonical semigroup homomorphism $\Psi:I^{*}\rightarrow\left(\Z,+\right)$.
By Theorem \ref{thm:main-theorem}, each homomorphism $c:\left(\Z,+\right)\rightarrow\left(\R^{+},\cdot\right)$
gives rise to a recurrent potential $\varphi\circ\pi_{1}$, given
by $\varphi\left(x\right):=\log c\left(x_{1}\right)$, for which we
have $\mathcal{P}\left(\varphi\circ\pi_{1},\sigma\rtimes\Psi\right)=\mathcal{P}\left(0,\sigma\right)=\log2$.
Moreover, we have that 
\[
\mathcal{P}\left(\varphi\circ\pi_{1},\sigma\rtimes\Psi\right)=\log2\le\log(c(1)+c(-1))=\mathcal{P}\left(\varphi,\sigma\right),
\]
with equality if and only if $c=1$ (cf. Corollary \ref{cor:base-skewproduct-coincide}).
By Proposition \ref{prop:fullpressure-symmetric-on-average} we have
$c=1$ if and only if $\varphi$ is symmetric on average with respect
to $\Psi$. Also, note that the gap between $\mathcal{P}\left(\varphi\circ\pi_{1},\sigma\rtimes\Psi\right)$
and $\mathcal{P}\left(\varphi,\sigma\right)$ can be arbitrarily large.
Further, $\varphi\circ\pi_{1}$ is null recurrent by Corollary \ref{cor:recurrence-implies-amenable}
(\ref{enu:positive-recurrence-finite}).
\end{example}
Proposition \ref{prop:fullpressure-symmetric-on-average} extends
a result of Matsuzaki and Yabuki (Theorem \ref{thm:matsuzaki-yabuki})
for Kleinian groups to the general framework of the thermodynamic
formalism for group extensions of Markov shifts. In this way, we shed
new light on Theorem \ref{thm:matsuzaki-yabuki}. The result of Matsuzaki
and Yabuki was used in \cite{Jaerisch12a} to give a short new proof
of a lower bound of the exponent of convergence for normal subgroups
of Kleinian groups. By the results of Proposition \ref{prop:fullpressure-symmetric-on-average},
a similar lower bound can be proved in the setting of graph directed
Markov systems associated to free groups (\cite{Jaerisch12ab}). 

The proof of Theorem \ref{thm:main-theorem} relies on Sarig's thermodynamic
formalism for recurrent potentials (see \cite{MR1818392} or Theorem
\ref{thm:sarig-recurrence-characterisation}), which characterises
recurrence in terms of the existence of a conservative eigenmeasure
and a unique continuous eigenfunction of the Perron-Frobenius operator.
Sarig's construction of this eigenmeasure is similar to the construction
of the Patterson measure (\cite{MR0450547}). The work of Aaronson,
Denker and Urba\'{n}ski (\cite{MR1107025}), giving exactness and
pointwise dual ergodicity of eigenmeasures of the Perron-Frobenius
operator under certain assumptions, is also crucial for Sarig's results. 

The paper is organised as follows. In Section \ref{sec:Preliminaries}
we collect the necessary preliminaries from the thermodynamic formalism
for countable state Markov shifts, which includes the definition of
a Markov shift, Hölder continuous functions, the Gurevi\v{c} pressure
and Gibbs measures in Subsection \ref{sub:Symbolic-Dynamics}. In
Subsection \ref{sub:Recurrent-potentials} we give the definition
of recurrent potentials and a characterisation of these in terms of
fixed points of the Perron-Frobenius operator, which is due to Sarig.
In Subsection \ref{sub:Group-Extensions}, we review some recent results
on group-extended Markov systems and amenability. In Section \ref{sec:Proof-of-the}
we give the proofs of our main results.

Let us end this introductory section with the following subsections.
In Subsection \ref{sub:matsuzaki-yabuki} we show how to deduce Theorem
\ref{thm:matsuzaki-yabuki} of Matsuzaki and Yabuki for a large class
of Fuchsian groups by applying Proposition \ref{prop:fullpressure-symmetric-on-average}.
In Subsection \ref{sub:cogrowth}, we give an ergodic-theoretic proof
of an estimate on the cogrowth of group presentations. In Subsection
\ref{sub:Amenable-groups} we discuss how our main theorem relates
to amenability of groups.  In Subsection \ref{sub:Outlook} we briefly
relate our results to the classification of recurrent groups. We give
an extension of this classification to locally constant potentials
on irreducible group-extended Markov systems and we give a conjecture
for the general case of Hölder continuous potentials.

\subsection{A related result of Matsuzaki and Yabuki for Kleinian groups\label{sub:matsuzaki-yabuki}}

In order to state the result of Matsuzaki and Yabuki, recall that
a Kleinian group $G$ is a discrete group of isometries acting on
hyperbolic space, and that $G$ is defined to be of divergence type
if its Poincaré series diverges at the exponent of convergence $\delta\left(G\right)$.
We refer to \cite{Beardon,MR959135,MR1041575} for details on Kleinian
groups. 
\begin{thm}
[\cite{MR2486788}, Theorem 4.2, Corollary 4.3]  \label{thm:matsuzaki-yabuki}Let
$N$ denote a non-trivial normal subgroup of a Kleinian group $\Gamma$.
If $N$ is of divergence type, then we have that $\delta\left(N\right)=\delta\left(\Gamma\right)$. 
\end{thm}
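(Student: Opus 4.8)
The plan is to transport the statement into the symbolic framework of the present paper and then read it off from Proposition~\ref{prop:fullpressure-symmetric-on-average}. First I would fix a coding of a large class of Fuchsian groups $\Gamma$ (for instance convex cocompact ones): the Bowen--Series boundary map yields a finitely primitive Markov shift $(\Sigma,\sigma)$ over an alphabet $I$ together with the geometric potential $\zeta:=\log|T'|$, which is H\"older continuous, such that the Poincar\'e series of $\Gamma$ at exponent $s$ is comparable to $\sum_{k}\sum_{\omega\in\Sigma^{k}}\e^{-s\sup S_{k}\zeta_{|[\omega]}}$. In this dictionary the exponent of convergence $\delta(\Gamma)$ is the unique zero of the strictly decreasing function $s\mapsto\mathcal{P}(-s\zeta,\sigma)$. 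Writing $G:=\Gamma/N$ for the quotient by the normal subgroup $N$ and letting $\Psi:I^{*}\to G$ be the semigroup homomorphism induced by the coding, the words $\omega$ with $\Psi(\omega)$ equal to the identity of $G$ are exactly those representing elements of $N$; hence the Poincar\'e series of $N$ is comparable to the corresponding return series of $\sigma\rtimes\Psi$, and $\delta(N)$ is characterised as the unique $s$ with $\mathcal{P}(-s\zeta\circ\pi_{1},\sigma\rtimes\Psi)=0$.

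Second, I would identify the two geometric hypotheses with the two hypotheses of Proposition~\ref{prop:fullpressure-symmetric-on-average}, applied to $\varphi:=-\delta(N)\zeta$. On one hand, divergence type of $N$ means precisely that the $N$-Poincar\'e series diverges at its critical exponent, and under the dictionary this is exactly the recurrence of the critical potential $\varphi\circ\pi_{1}=-\delta(N)\,\zeta\circ\pi_{1}$ for $\sigma\rtimes\Psi$: recurrence at the critical value is the divergence of the associated loop/return series, by Sarig's construction of the conservative eigenmeasure (cf.\ Theorem~\ref{thm:sarig-recurrence-characterisation}). On the other hand, the symmetry $d(o,go)=d(o,g^{-1}o)$ of the hyperbolic distance translates into a uniform comparability of the $g$- and $g^{-1}$-weighted return sums, which is exactly the assertion that $\varphi$ is symmetric on average with respect to $\Psi$ in the sense of Definition~\ref{def:symmetric-on-average} (note that, at criticality, the weights $\e^{-k\mathcal{P}(\varphi\circ\pi_{1},\sigma\rtimes\Psi)}$ are equal to $1$).

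With these two facts in hand, I would apply Proposition~\ref{prop:fullpressure-symmetric-on-average} to conclude $\mathcal{P}(\varphi\circ\pi_{1},\sigma\rtimes\Psi)=\mathcal{P}(\varphi,\sigma)$. By the choice $\varphi=-\delta(N)\zeta$, the left-hand side vanishes (this is how $\delta(N)$ was characterised, together with the recurrence just established), so $\mathcal{P}(-\delta(N)\zeta,\sigma)=0$. Since $s\mapsto\mathcal{P}(-s\zeta,\sigma)$ is strictly decreasing with unique zero $\delta(\Gamma)$, we obtain $\delta(N)=\delta(\Gamma)$, as required. The reverse inequality $\delta(N)\le\delta(\Gamma)$ is automatic from $N\subseteq\Gamma$, so the whole content of the theorem is this comparison of pressure zeros.

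The main obstacle I anticipate is not the application of Proposition~\ref{prop:fullpressure-symmetric-on-average} but the verification of the symbolic dictionary for as large a class of Fuchsian groups as possible. Ensuring that $(\Sigma,\sigma)$ is finitely primitive, that $(\Sigma\times G,\sigma\rtimes\Psi)$ is topologically mixing (here normality of $N$ guarantees that $G=\Gamma/N$ is a group and that the extension is irreducible, while non-triviality of $N$ excludes the degenerate case), and that $\zeta$ is uniformly H\"older with finite Gurevi\v{c} pressure, is delicate precisely when $\Gamma$ has parabolic elements: the natural coding then has an infinite alphabet and $\zeta$ ceases to be bounded, so the comparison between the Poincar\'e series and the symbolic sums, as well as finite primitivity, must be re-established (or the admissible class of groups restricted accordingly). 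I would also need to confirm that critical-exponent recurrence matches divergence type exactly, that is, that no normalising constant is lost when passing between the hyperbolic series and the Gurevi\v{c} pressure.
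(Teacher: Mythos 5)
Your proposal is correct and follows essentially the same route as the paper's own derivation in Subsection \ref{sub:matsuzaki-yabuki}: encode $\Gamma$ symbolically with a geometric potential satisfying Bowen's formula, identify divergence type of $N$ with recurrence of the critical potential on the extension by $\Gamma/N$, deduce symmetry on average from $d(0,g0)=d(0,g^{-1}0)$, and conclude via Proposition \ref{prop:fullpressure-symmetric-on-average}. The only (inessential) difference is that you work with a Bowen--Series coding of convex cocompact groups whereas the paper restricts to Fuchsian groups of Schottky type coded by $\Sigma_{\F_t}$, and both treatments, like yours, only recover the cited theorem for such a restricted class rather than for all Kleinian groups.
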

In this subsection we briefly explain how Theorem \ref{thm:matsuzaki-yabuki}
is related to Proposition \ref{prop:fullpressure-symmetric-on-average}.
We consider the special case in which $\Gamma$ is a non-elementary
Fuchsian group of Schottky type acting on the Poincaré disc model
of hyperbolic $2$-space $\left(\mathbb{D},d\right)$, where $d$
denotes the hyperbolic metric. Recall that in this case, $\Gamma$
is isomorphic to the free group $\F_{t}=\left\langle g_{1},\dots,g_{t}\right\rangle $
with $t\ge2$. It is well-known (see e.g. \cite{MR662473}) that the
limit set of $\Gamma$ can be identified with the state space of the
Markov shift $\Sigma_{\F_{t}}$ with alphabet $I:=\left\{ g_{1},g_{1}^{-1},g_{2},g_{2}^{-1},\dots,g_{t},g_{t}^{-1}\right\} $
given by 
\[
\Sigma_{\F_{t}}=\{\omega\in I^{\N}:\omega_{i}\neq\omega_{i+1}^{-1}\}.
\]
Note that $\Sigma_{\F_{t}}$ is equal to the space of ends of the
Cayley graph of $\F_{t}$ with respect to $I$.

It is well-known that there exists a Hölder continuous potential $\zeta:\Sigma_{\F_{t}}\rightarrow\R$
which captures the modulus of the derivatives of the generators of
$\Gamma$ acting on the limit set (see \cite{MR662473}). This potential
has the geometric property that there exists $C>0$ such that, for
all $n\in\N$ and $\omega\in\Sigma_{\F_{t}}^{n}$, we have that 
\begin{equation}
C^{-1}\e^{\sup S_{n}\zeta_{|[\omega]}}\le\e^{-d(0,\omega_{1}\omega_{2}\dots\cdot\omega_{n}(0))}\le C\e^{\sup S_{n}\zeta_{|[\omega]}}.\label{eq:geometric-potential-vs-hyperbolicmetric}
\end{equation}
For a non-trivial normal subgroup $N$ of $\F_{t}$, let $\Psi_{N}:I^{*}\rightarrow\F_{t}/N$
be the unique semigroup homomorphism such that $\Psi_{N}\left(g_{i}\right):=Ng_{i}$,
for each $g_{i}\in I$. It follows from (\ref{eq:geometric-potential-vs-hyperbolicmetric})
that the exponents of convergence $\delta\left(\Gamma\right)$ and
$\delta\left(N\right)$ are characterised by the following equations,
which are often referred to as Bowen's formula (cf. \cite{MR556580}):
\begin{equation}
\mathcal{P}\left(\delta\left(\Gamma\right)\zeta,\sigma\right)=0,\quad\mathcal{P}\left(\delta\left(N\right)\left(\zeta\circ\pi_{1}\right),\sigma\rtimes\Psi_{N}\right)=0.\label{eq:poincareexponent-pressurezero}
\end{equation}
Moreover, using the fact that each $g\in\Gamma$ acts isometrically
on $\left(\mathbb{D},d\right)$, one immediately deduces from (\ref{eq:geometric-potential-vs-hyperbolicmetric})
that $\zeta$ is symmetric on average with respect to $\Psi_{N}$.
Moreover, we have that $N$ is of divergence type if and only if $\delta\left(N\right)\left(\zeta\circ\pi_{1}\right)$
is recurrent with respect to $\sigma\rtimes\Psi_{N}$. Hence, if $N$
is of divergence type, then Proposition \ref{prop:fullpressure-symmetric-on-average}
yields
\[
\mathcal{P}\left(\delta\left(N\right)\zeta,\sigma\right)=\mathcal{P}\left(\delta\left(N\right)\left(\zeta\circ\pi_{1}\right),\sigma\rtimes\Psi_{N}\right)=0,
\]
which, in light of (\ref{eq:poincareexponent-pressurezero}), gives
$\delta\left(N\right)=\delta\left(\Gamma\right)$.

\subsection{An application to the cogrowth of group presentations\label{sub:cogrowth}}

Let $\left\langle g_{1},\dots,g_{t}|r_{1},r_{2},\dots\right\rangle $,
$t\ge2$, be the presentation of a finitely generated group $G$ and
let $N$ denote a non-trivial normal subgroup of $\F_{t}$, generated
by the relations $r_{1},r_{2},\dots$. The \emph{cogrowth $\eta$}
of $\left\langle g_{1},\dots,g_{t}|r_{1},r_{2},\dots\right\rangle $,
which was independently introduced by Grigorchuk (\cite{MR599539})
and Cohen (\cite{MR678175}), is given by 
\[
\eta:=\frac{\log\gamma}{\log\left(2t-1\right)},\quad\mbox{where }\gamma:=\limsup_{n\rightarrow\infty}\left(\card\left(\left\{ \omega\in\F_{t}\cap N:\left|\omega\right|=n\right\} \right)\right)^{1/n}.
\]
In here, $\left|\omega\right|$ refers to the word length of $\omega$
with respect to $\left\{ g_{1},g_{1}^{-1},g_{2},g_{2}^{-1},\dots,g_{t},g_{t}^{-1}\right\} $.
It is known that $1/2<\eta\le1$ and that $\eta=1$ if and only if
$G$ is amenable (\cite{MR599539,MR678175}). Since $N$ is a normal
subgroup of $\F_{t}$, one easily verifies that 
\begin{equation}
\sum_{n\in\N}\left(\card\left\{ \omega\in N,\left|\omega\right|=n\right\} \right)\left(2t-1\right)^{-n/2}=\infty,\label{eq:cogrowth-recurrence}
\end{equation}
which immediately gives that $\left(2t-1\right)^{1/2}\le\gamma$ and
hence, $\eta\ge1/2$. We aim to give an ergodic-theoretic proof of
the strict inequality $\eta>1/2$ by using Proposition \ref{prop:fullpressure-symmetric-on-average}.
In order to apply Proposition \ref{prop:fullpressure-symmetric-on-average},
we consider the constant zero function $0:\Sigma_{\F_{t}}\rightarrow\R$
and the group-extended Markov system $\sigma\rtimes\Psi_{N}:\Sigma_{\F_{t}}\times\left(\F_{t}/N\right)\rightarrow\Sigma_{\F_{t}}\times\left(\F_{t}/N\right)$
as in Section \ref{sub:matsuzaki-yabuki}. Clearly, we have $\mathcal{P}\left(0,\sigma\right)=\log\left(2t-1\right)$
and $\mathcal{P}\left(0,\sigma\rtimes\Psi_{N}\right)=\log\gamma$.
Further, $0$ is symmetric on average with respect to $\Psi_{N}$.
By Proposition \ref{prop:fullpressure-symmetric-on-average} we conclude
that either $0$ is recurrent with respect to $\sigma\rtimes\Psi_{N}$
and $\gamma=2t-1$, or, $0$ is transient with respect to $\sigma\rtimes\Psi_{N}$.
In the latter case, it follows from (\ref{eq:cogrowth-recurrence})
that $\gamma\neq\left(2t-1\right)^{1/2}$. We have thus shown that
in both cases, $\left(2t-1\right)^{1/2}<\gamma$ and hence, $\eta>1/2$.

\subsection{Amenable groups\label{sub:Amenable-groups}}

Lifting a potential to a group extension may result in a drop of Gurevi\v{c}
pressure, that is $\mathcal{P}\left(\varphi\circ\pi_{1},\sigma\rtimes\Psi\right)<\mathcal{P}\left(\varphi,\sigma\right)$.
This phenomenon is intimately linked to non-amenability of the associated
group and was first observed by Kesten for random walks on countable
discrete groups driven by a symmetric independent identically distributed
process on a set of generators (\cite{MR0109367,MR0112053}). The
cogrowth criterion due to Grigorchuk and Cohen (\cite{MR599539,MR678175}),
which characterises amenability of groups in terms of cogrowth, is
a similar result, in which the independent identically distributed
process is replaced by a certain Markov process. Moreover, a result
of Brooks (\cite{MR783536}) for Kleinian groups certainly fits into
this context, where equality of the exponents of convergence of normal
subgroups of certain convex co-compact Kleinian groups is characterised
in terms of amenability. Recently, amenability of groups has been
studied in the framework of the thermodynamic formalism of group-extended
Markov systems, where the random walk is driven by a Gibbs measure
associated to a Hölder continuous potential on a Markov shift (\cite{JaerischDissertation11,Jaerisch11a,Stadlbauer11}).
We refer to Section \ref{sub:Group-Extensions} for a short review
of these results.

Let us now explain how these results are related to our results given
in this paper. Let $\left(\Sigma\times G,\sigma\rtimes\Psi\right)$
be an irreducible group-extended Markov system and let $\varphi:\Sigma\rightarrow\R$
be Hölder continuous.  Corollary \ref{cor:recurrence-implies-amenable}
(\ref{enu:recurrent-implies-amenable}) and Proposition \ref{prop:fullpressure-symmetric-on-average}
prove that, for a recurrent potential $\varphi\circ\pi_{1}$, the
associated group is amenable, and that the skew product pressure $\mathcal{P}\left(\varphi\circ\pi_{1},\sigma\rtimes\Psi\right)$
and the base pressure $\mathcal{P}\left(\varphi,\sigma\right)$ coincide
if and only if $\varphi$ is symmetric on average with respect to
$\Psi$. If $\varphi$ is asymptotically symmetric with respect to
$\Psi$ (see Definition \ref{def:asymptotically-symmetric}) or weakly
symmetric (cf. \cite{Stadlbauer11}) and the group is amenable, then
$\mathcal{P}\left(\varphi\circ\pi_{1},\sigma\rtimes\Psi\right)=\mathcal{P}\left(\varphi,\sigma\right)$
by Theorem \ref{thm:amenablesymmetric-implies-fullpressure} (cf.
\cite[Theorem 4.1]{Stadlbauer11}). Hence, a recurrent and asymptotically
symmetric potential is symmetric on average. For an amenable group
and a potential which is not necessarily recurrent, a necessary condition
for the base pressure and skew product pressure to coincide is not
known. A sufficient condition is that the potential is asymptotically
symmetric or weakly symmetric. It seems that the proofs given in \cite{Jaerisch12c}
or \cite[Theorem 4.1]{Stadlbauer11}, which make use of Kesten's classical
criterion for amenability, cannot be generalised to potentials which
are symmetric on average.

\subsection{Recurrent groups\label{sub:Outlook}}

A classical result by Pólya (\cite{MR1512028}) states that the simple
random walk on $\Z^{d}$ is recurrent if and only if $d\le2$. Dudley
(\cite{MR0141167}) has shown that a countable abelian group is recurrent
if it has rank at most two, where a \emph{recurrent group} is a group
which carries an irreducible recurrent random walk. Kesten (\cite{MR0214137})
motivated the interesting task of classifying recurrent groups and
it became known as Kesten's conjecture that finitely generated recurrent
groups are finite or contain $\Z$ or $\Z^{2}$ as a finite index
subgroup. Kesten's conjecture was proved by Varopoulos (\cite{MR832044})
using a famous result of Gromov (\cite{MR623534}). An extension for
random walks on locally finite graphs with quasi-transitive automorphism
groups is due to Woess (\cite[Corollary 4.7]{MR1246471},\cite[Theorem 4.1]{MR1397471}).
Another related result is due to Rees (\cite{MR627791,MR661820})
which shows the following for a non-trivial normal subgroup $N$ of
a geometrically finite Fuchsian groups $\Gamma$ such that $\Gamma/N\simeq\Z^{d}$
for some $d\in\N$: Firstly, the exponents of convergence $\delta\left(N\right)$
and $\delta\left(\Gamma\right)$ coincide. Further, if $\Gamma$ contains
no parabolic elements, then the Fuchsian group $N$ is of divergence
type if and only if $d\le2$. If $\Gamma$ contains parabolic elements,
then the situation is more complicated, however, the implication that
a divergence type subgroup $N$ satisfies $d\le2$ remains true. In
fact, the results in \cite{MR627791} are deduced for general Gibbs
measures satisfying a certain symmetry condition. However, we remark
that, in contrast to the result of Matsuzaki and Yabuki and to Proposition
\ref{prop:fullpressure-symmetric-on-average}, it is \emph{a priori}
assumed in the work of Rees that $\Gamma/N\simeq\Z^{d}$, for some
$d\in\N$.

In our framework, the classification of recurrent groups implies the
following. Let $\Psi:I^{*}\rightarrow G$ denote a surjective semigroup
homomorphism and let $\varphi:I^{\N}\rightarrow\R$ be a potential
depending only on the first coordinate. If $\varphi\circ\pi_{1}$
is recurrent, then $G$ is a recurrent group. In particular, since
each recurrent group is amenable, we obtain from Kesten's classical
theorem that if $\varphi$ is symmetric on average with respect to
$\Psi$, then  $\mathcal{P}\left(\varphi\circ\pi_{1},\sigma\rtimes\Psi\right)=\mathcal{P}\left(\varphi,\sigma\right)$.
Regarding Example \ref{recurrent-non-symmetric-example}, we observe
that, the potential $\varphi\circ\pi_{1}$ is recurrent, $\Z$ is
a recurrent group, and $\mathcal{P}\left(\varphi\circ\pi_{1},\sigma\rtimes\Psi\right)<\mathcal{P}\left(\varphi,\sigma\right)$
whenever the potential is not symmetric on average with respect to
$\Psi$. 

If we replace the full shift $I^{\N}$ by an irreducible Markov shift
$\Sigma$ with finite alphabet $I$, and if we consider $\varphi:\Sigma\rightarrow\R$
depending only on the first coordinate, then we obtain the following
by investigating the group of graph automorphisms using a result of
Woess (\cite[Theorem 4.1]{MR1397471}).
\begin{prop}
\label{prop:classification-of-recurrentgroups-for-markov}Let $\left(\Sigma\times G,\sigma\rtimes\Psi\right)$
be an irreducible group-extended Markov system with $\card\left(I\right)<\infty$.
Suppose that $\varphi:\Sigma\rightarrow\R$ depends only on the first
coordinate. If $\varphi\circ\pi_{1}$ is recurrent, then $G$ is a
recurrent group. \end{prop}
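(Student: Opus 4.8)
The plan is to realise the group extension as a quasi-transitive random walk on a locally finite graph and then to apply the structure theory for recurrent quasi-transitive graphs. First I would build the graph $X$ with vertex set $I\times G$ and a directed edge from $(i,g)$ to $(j,g\Psi(i))$ whenever $ij$ is an admissible word of $\Sigma$; this is precisely the graph traced out by the skew product $\sigma\rtimes\Psi$ on the group coordinate. Since $\card\left(I\right)<\infty$, the graph $X$ is locally finite of uniformly bounded degree, and irreducibility of $\left(\Sigma\times G,\sigma\rtimes\Psi\right)$ makes $X$ strongly connected; in particular the admissible cycles force $\Psi\left(I\right)$ to generate $G$, so $G$ is finitely generated. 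The group $G$ acts on $X$ by left translation $h\cdot(i,g)=(i,hg)$, and since $\left(\sigma\rtimes\Psi\right)$ commutes with this action (left multiplication on the second coordinate), each such $h$ is a graph automorphism. This action is free and has exactly $\card\left(I\right)$ orbits, hence is quasi-transitive and cocompact; by the Milnor--\v{S}varc lemma it follows that $X$ is quasi-isometric to a Cayley graph of $G$.

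Next I would pass from the potential to an actual random walk. By Theorem \ref{thm:main-theorem}, recurrence of $\varphi\circ\pi_{1}$ supplies the homomorphism $c:G\rightarrow\left(\R^{+},\cdot\right)$ together with the conservative equilibrium measure $\mu_{\varphi_{c}}\times\lambda$ for $\sigma\rtimes\Psi$ (assertion (\ref{enu:equilibrium-measure})). The key point is that $\varphi$, and therefore $\varphi_{c}$, depends only on the first coordinate, so $\mu_{\varphi_{c}}$ is a Markov measure on $\Sigma$ and the transition probabilities it induces on $X$ are invariant under the $G$-action; in other words the associated Markov chain is a genuinely $G$-invariant, quasi-transitive random walk on $X$. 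Working with $\varphi_{c}$ rather than $\varphi$ is exactly what secures this invariance: the eigenmeasure of $\varphi$ itself is the twisted measure $\left(c\circ\pi_{2}\right)^{-1}d\left(\nu_{1}\times\lambda\right)$ of assertion (\ref{enu:conformal-measure}), which is not $G$-invariant when $c$ is nontrivial, whereas the equilibrium measure $\mu_{\varphi_{c}}\times\lambda$ is. Conservativity of this $G$-invariant measure under the $G$-equivariant map $\sigma\rtimes\Psi$ is precisely recurrence of the quasi-transitive random walk on $X$.

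Finally I would invoke Woess \cite[Theorem 4.1]{MR1397471} applied to the automorphism group of $X$: a quasi-transitive graph carrying a recurrent quasi-transitive random walk is itself recurrent (recurrence of a quasi-transitive graph being independent of the particular invariant walk), and such graphs are roughly isometric to a point, to $\Z$, or to $\Z^{2}$. Since $X$ is quasi-isometric to $\mathrm{Cay}(G)$ and recurrence is a quasi-isometry invariant for graphs of bounded degree, the Cayley graph of $G$ is recurrent; equivalently $G$ carries an irreducible recurrent random walk, so $G$ is a recurrent group, consistent with (and strengthening) the amenability conclusion of Corollary \ref{cor:recurrence-implies-amenable}. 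The main obstacle I anticipate lies in the dictionary of the second step, namely verifying that Sarig's thermodynamic recurrence (conservativity of the eigenmeasure) coincides with probabilistic recurrence of a random walk meeting the exact hypotheses of Woess's theorem; the reduction to $\varphi_{c}$ and the use of ``$\varphi$ depends only on the first coordinate'' are what make the walk quasi-transitive, and one must confirm that conservativity of the $G$-invariant skew product is recurrence in Woess's sense. The remaining graph-theoretic input---local finiteness, free cocompact action, quasi-isometry to $\mathrm{Cay}(G)$, and quasi-isometry invariance of recurrence---is comparatively routine.
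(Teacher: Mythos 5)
Your proposal is correct and follows essentially the same route as the paper: realise the extension as a random walk on the graph with vertex set $I\times G$, use $\card\left(I\right)<\infty$ and the fact that $\varphi$ depends only on the first coordinate to make $G$ act quasi-transitively by automorphisms preserving the (recurrent) walk, and then invoke Woess's structure theory for recurrent quasi-transitive graphs. The only divergence is in the endgame: the paper applies \cite[Theorem 5.13]{MR1743100} and then \cite[Theorem 4.1]{MR1397471} directly to the discrete subgroup $G\subset\mathrm{Aut}\left(X,P\right)$ to extract a finite normal subgroup $N$ with $G/N$ virtually $\Z$ or $\Z^{2}$, whereas you reach the same conclusion via the Milnor--\v{S}varc lemma and quasi-isometry invariance of recurrence; both are valid, and your flagged concern about matching Sarig's recurrence with probabilistic recurrence is exactly the point the paper delegates to the argument of \cite[Proof of Theorem 1.2]{Jaerisch11a}.
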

\begin{conjecture}
Let $\left(\Sigma\times G,\sigma\rtimes\Psi\right)$ be an irreducible
group-extended Markov system with $\card\left(I\right)<\infty$. Suppose
that $\varphi:\Sigma\rightarrow\R$ is Hölder continuous. If $\varphi\circ\pi_{1}$
is recurrent, then $G$ is a recurrent group. 
\end{conjecture}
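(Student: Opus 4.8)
The plan is to recast recurrence of the locally constant potential as recurrence of a group-invariant random walk on a quasi-transitive graph, and then to feed this into Woess's structure theory for the automorphism groups of recurrent graphs. First I would translate the data into a graph. Since $\varphi$ depends only on the first coordinate and $I$ is finite, the Perron--Frobenius operator $\mathcal{L}_{\varphi\circ\pi_1}$, restricted to functions of the pair $\left(x_1,g\right)$, is a weighted adjacency operator $L$ on the locally finite directed graph $X$ with vertex set $I\times G$ and an edge of weight $\e^{\varphi\left(i\right)}$ from $\left(i,g\right)$ to $\left(j,g\Psi\left(i\right)\right)$ whenever $ij$ is admissible. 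The group $G$ acts on $X$ by $k\cdot\left(i,g\right)=\left(i,kg\right)$; because the weights depend only on the source symbol, this action is by weight-preserving graph automorphisms, it is free, and its quotient is the finite base graph on $I$. Hence $G\le\Aut\left(X\right)$ acts quasi-transitively, $X$ has bounded degree, and irreducibility of $\left(\Sigma\times G,\sigma\rtimes\Psi\right)$ is exactly strong connectedness of $X$. As the labels of loops based at a fixed symbol are the $\Psi$-image of the finite-rank free fundamental group of the base graph, $G$ is finitely generated and $X$ is quasi-isometric to a Cayley graph of $G$.

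Next I would convert thermodynamic recurrence into a genuine recurrent walk. By Sarig's characterisation (Theorem \ref{thm:sarig-recurrence-characterisation}), recurrence of $\varphi\circ\pi_1$ is equivalent to $R$-recurrence of $L$ at $\rho:=\e^{\mathcal{P}\left(\varphi\circ\pi_1,\sigma\rtimes\Psi\right)}$, and it produces a positive eigenfunction $h$ with $Lh=\rho h$ that is unique up to a scalar. The crucial observation is that, because $L$ is $G$-invariant, for each $k\in G$ the function $v\mapsto h\left(k^{-1}v\right)$ is again a positive $\rho$-eigenfunction; uniqueness then forces $h\left(k^{-1}v\right)=c\left(k\right)h\left(v\right)$ for a homomorphism $c:G\to\left(\R^{+},\cdot\right)$. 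I would then pass to the Doob $h$-transform $P_{vw}:=\rho^{-1}L_{vw}h\left(w\right)/h\left(v\right)$, which is an honest stochastic matrix on $X$; and since the character $c$ cancels in the ratio $h\left(kw\right)/h\left(kv\right)$, the walk $P$ is $G$-invariant. Finally, $R$-recurrence of $L$ translates into recurrence of the Markov chain $P$, so $X$ carries a $G$-invariant, irreducible, recurrent random walk.

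With $X$ a connected quasi-transitive graph carrying a recurrent random walk, the result of Woess (\cite[Theorem 4.1]{MR1397471}) on automorphism groups of recurrent quasi-transitive graphs applies and yields that $G$ is a recurrent group. Concretely --- and this is the mechanism underlying that result --- one induces $P$ to the fibre $\left\{a\right\}\times G\cong G$: by $G$-invariance the first-return kernel depends only on $g^{-1}g'$, hence it is the convolution walk driven by a probability measure $\mu$ on $G$; recurrence of $P$ gives recurrence of the induced walk, and strong connectedness of $X$ gives that $\supp\left(\mu\right)$ generates $G$, so the induced walk is irreducible. Hence $G$ carries an irreducible recurrent random walk, which is the assertion.

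The main obstacle is entirely in the second step, the passage from the thermodynamic notion of recurrence to a genuine $G$-invariant recurrent Markov chain. Two points need care. First, $L$ is in general non-reversible (the weight $\e^{\varphi\left(i\right)}$ is attached to the source), so classical symmetric-network criteria are unavailable and one must route through the $h$-transform and the uniqueness of the $R$-eigenfunction to manufacture both the character $c$ and the stochastic walk $P$. Second, the system is only assumed irreducible and may have period $p>1$, so it need not be topologically mixing and Theorem \ref{thm:main-theorem} does not apply verbatim; I would handle this by passing to the $p$-th power, a mixing subsystem over $\Sigma^{p}$, and to the subgroup $G_{0}=\bigcup_{n}\Psi\left(\Sigma^{np}\right)$ as in Remark \ref{irreducible-remark}, deducing recurrence of $G_{0}$ and hence of $G$. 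Once the $G$-invariant recurrent walk is in hand, the group-theoretic conclusion via Woess is comparatively formal.
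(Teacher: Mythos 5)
There is a fundamental gap: the statement you are asked to prove is the paper's \emph{Conjecture}, which assumes only that $\varphi$ is H\"older continuous, but the very first step of your construction silently adds the hypothesis that ``$\varphi$ depends only on the first coordinate''. With that extra hypothesis the statement becomes Proposition \ref{prop:classification-of-recurrentgroups-for-markov}, which the paper does prove, by essentially the strategy you describe (build the locally finite graph $X$ on $I\times G$, observe that $G$ acts quasi-transitively by automorphisms preserving the transition kernel, and invoke \cite[Theorem 4.1]{MR1397471} together with \cite[Theorem 5.13]{MR1743100}). For a genuinely H\"older continuous $\varphi$, however, the weight $\e^{\varphi\left(i\omega\right)}$ of a transition depends on the entire infinite tail $\omega$, not just on the pair of symbols, so $\mathcal{L}_{\varphi\circ\pi_{1}}$ does \emph{not} restrict to a weighted adjacency operator on the vertex set $I\times G$. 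There is then no canonical $G$-invariant nearest-neighbour kernel on a locally finite graph to which Woess's structure theory applies: the natural candidates (e.g.\ the first-return kernel of the equilibrium measure to a fibre $\left[a\right]\times G$, or cylinder weights $\e^{\sup S_{n}\varphi_{|\left[\omega\right]}}$) are only quasi-$G$-invariant up to distortion constants, and that loss of exact invariance is precisely what blocks the argument. This is why the paper states the general case as a conjecture and notes that it is known only under additional structural assumptions (\cite[Theorem 4.7]{MR627791}, where $G$ is free abelian).

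Your second and third paragraphs are sound \emph{for the locally constant case}: the Doob $h$-transform via Sarig's unique $\rho$-eigenfunction, the cancellation of the character $c$ in the ratio $h\left(w\right)/h\left(v\right)$, and the reduction of the period-$p$ case to the mixing subsystem over $\Sigma^{p}$ and the finite-index subgroup $G_{0}$ are all legitimate and closely parallel the paper's treatment in Remark \ref{irreducible-remark} and the proof of Proposition \ref{prop:classification-of-recurrentgroups-for-markov}. But as written the proposal proves a strictly weaker statement than the one posed; to address the conjecture itself you would need a new idea for replacing the exact $G$-invariance of the kernel by some form of cohomological or asymptotic invariance compatible with the recurrence criterion, and no such step appears in your argument.
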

The assertion of the conjecture was proved for a large class of Hölder
continuous potentials on $\Sigma_{\F_{t}}$ with $t\ge2$ under the
additional assumption that $G$ is a free abelian group in \cite[Theorem 4.7]{MR627791}.
It also worth noting that, if the conjecture is true, then it provides
a significant strengthening of Corollary \ref{cor:recurrence-implies-amenable}
(\ref{enu:recurrent-implies-amenable}), stating that recurrence of
$\varphi\circ\pi_{1}$ implies that $G$ is amenable.
\begin{acknowledgement*}
The author thanks Professor Stadlbauer for fruitful discussion and
for bringing the result of Zimmer to the author's attention. The author
thanks Sara Munday for valuable comments. The author is grateful to
Professor Morita for his support and to Professor Shirai for inviting
him to the Dynamical Systems Seminar of Kyushu university. 
\end{acknowledgement*}

\section{Preliminaries\label{sec:Preliminaries}}

\subsection{Symbolic dynamics\label{sub:Symbolic-Dynamics}}

Throughout, the underlying state space for the symbolic thermodynamic
formalism will be a \emph{Markov shift $\Sigma$, }given by 
\[
\Sigma:=\left\{ \omega:=\left(\omega_{1},\omega_{2},\ldots\right)\in I^{\N}:\; a\left(\omega_{i},\omega_{i+1}\right)=1\mbox{ for all }i\in\N\right\} ,
\]
where $I\subset\N$ denotes a finite or countable infinite \emph{alphabet},
the matrix $A=\left(a\left(i,j\right)\right)\in\left\{ 0,1\right\} ^{I\times I}$
is the \emph{incidence matrix} and the \emph{left shift map} $\sigma:\Sigma\rightarrow\Sigma$
is defined by $\sigma\left(\left(\omega_{1},\omega_{2},\ldots\right)\right):=\left(\omega_{2},\omega_{3},\ldots\right)$.
The set of \emph{$A$-admissible words} of length $n\in\mathbb{N}$
is given by 
\[
\Sigma^{n}:=\left\{ \omega\in I^{n}:\,\, a\left(\omega_{i},\omega_{i+1}\right)=1,\,\mbox{for all }i\in\left\{ 1,\dots,n-1\right\} \right\} .
\]
The set of $A$-admissible words of arbitrary length is denoted by
$\Sigma^{*}:=\bigcup_{n\in\N}\Sigma^{n}$. Let us also define the
\emph{word length function} $\left|\cdot\right|:\,\Sigma^{*}\cup\Sigma\rightarrow\N\cup\left\{ \infty\right\} $,
where for $\omega\in\Sigma^{*}$ we set $\left|\omega\right|$ to
be the unique $n\in\N$ such that $\omega\in\Sigma^{n}$, and for
$\omega\in\Sigma$ we set $\left|\omega\right|:=\infty$. For each
$\omega\in\Sigma^{*}\cup\Sigma$ and $n\in\N$ with $n\le\left|\omega\right|$,
we define $\omega_{|n}:=\left(\omega_{1},\dots,\omega_{n}\right)$.
For $\omega,\tau\in\Sigma$, we set $\omega\wedge\tau$ to be the
longest common initial block of $\omega$ and $\tau$, that is, $\omega\wedge\tau:=\omega_{|l}$,
where $l:=\sup\left\{ n\in\N:\omega_{|n}=\tau_{|n}\right\} $. For
$n\in\N$ and $\omega\in\Sigma^{n}$, we let $\left[\omega\right]:=\left\{ \tau\in\Sigma:\tau_{|n}=\omega\right\} $
denote the \emph{cylindrical set} given by $\omega$.

If $\Sigma$ is the Markov shift with alphabet $I$ whose incidence
matrix consists entirely of $1$s, then we have that $\Sigma=I^{\N}$
and $\Sigma^{n}=I^{n}$ for all $n\in\N$. Then we set $I^{*}:=\Sigma^{*}$.
For $\omega,\tau\in I^{*}$ we denote by $\omega\tau\in I^{*}$ the
\emph{concatenation} of $\omega$ and $\tau$, which is defined by
$\omega\tau:=\left(\omega_{1},\dots,\omega_{\left|\omega\right|},\tau_{1},\dots,\tau_{\left|\tau\right|}\right)$
for $\omega,\tau\in I^{*}$. Note that $I^{*}$ forms a semigroup
with respect to the concatenation operation. The semigroup $I^{*}$
is the free semigroup over the set $I$ and satisfies the following
universal property: For each semigroup $S$ and for every map $u:I\rightarrow S$,
there exists a unique semigroup homomorphism $\hat{u}:I^{*}\rightarrow S$
such that $\hat{u}\left(i\right)=u\left(i\right)$, for all $i\in I$
(see \cite[Section 3.10]{MR1650275}).

We equip $I^{\N}$ with the product topology of the discrete topology
on $I$. The Markov shift $\Sigma\subset I^{\N}$ is equipped with
the subspace topology. A countable basis of this topology on $\Sigma$
is given by the cylindrical sets $\left\{ \left[\omega\right]:\omega\in\Sigma^{*}\right\} $.
We will make use of the following metric generating the topology on
$\Sigma$. For $\alpha>0$, we define the metric $d_{\alpha}$ on
$\Sigma$ given by 
\[
d_{\alpha}\left(\omega,\tau\right):=\e^{-\alpha\left|\omega\wedge\tau\right|},\mbox{ for all }\omega,\tau\in\Sigma.
\]

For a function $f:\Sigma\rightarrow\R$ and $n\in\N$, we use the
notation $S_{n}f:\Sigma\rightarrow\R$ to denote the \emph{ergodic
sum} of $f$ with respect to the left shift $\sigma$, in other words,
$S_{n}f:=\sum_{i=0}^{n-1}f\circ\sigma^{i}$. Also, we set $S_{0}f:=0$. 

We say that $f:\Sigma\rightarrow\R$ is \emph{$\alpha$-H\"older
continuous}, for some $\alpha>0$, if 
\[
V_{\alpha}\left(f\right):=\sup_{n\ge1}\left\{ V_{\alpha,n}\left(f\right)\right\} <\infty,
\]
where for each $n\in\N$ we let 
\[
V_{\alpha,n}\left(f\right):=\sup\left\{ \frac{\left|f\left(\omega\right)-f\left(\tau\right)\right|}{d_{\alpha}\left(\omega,\tau\right)}:\omega,\tau\in\Sigma,\left|\omega\wedge\tau\right|\ge n\right\} .
\]
We say that \emph{$f$ }is\emph{ H\"older continuous} if there exists
$\alpha>0$ such that $f$ is $\alpha$-Hölder continuous.

The following fact is well-known (see e.g. \cite[Lemma 2.3.1]{MR2003772}). 
\begin{fact}
[Bounded distortion property] \label{fact-bounded-distortion-property}Let
$\Sigma$ be a Markov shift. If $f:\Sigma\rightarrow\R$ is Hölder
continuous, then there exists a constant $C_{f}\ge1$ such that, for
all $\omega\in\Sigma^{*}$ and $\tau,\tau'\in\left[\omega\right]$,
we have 
\[
\left|S_{\left|\omega\right|}f\left(\tau\right)-S_{\left|\omega\right|}f\left(\tau'\right)\right|\le\log C_{f}.
\]

\end{fact}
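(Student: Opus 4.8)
The plan is to estimate the ergodic sum term by term, exploiting the elementary fact that when two points lie in the same cylinder their forward iterates stay close for precisely the range of iterates that matters. First I would fix $\alpha>0$ for which $f$ is $\alpha$-H\"older continuous, so that $V_{\alpha}\left(f\right)<\infty$, and take $\omega\in\Sigma^{*}$ with $\left|\omega\right|=n$ together with $\tau,\tau'\in\left[\omega\right]$. Writing $S_{n}f=\sum_{i=0}^{n-1}f\circ\sigma^{i}$ and applying the triangle inequality, I would reduce the claim to controlling the individual summands
\[
\left|S_{n}f\left(\tau\right)-S_{n}f\left(\tau'\right)\right|\le\sum_{i=0}^{n-1}\left|f\left(\sigma^{i}\tau\right)-f\left(\sigma^{i}\tau'\right)\right|.
\]

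The key step is a geometric observation. Since $\tau$ and $\tau'$ share their first $n$ coordinates (both equal to $\omega$), for each $0\le i\le n-1$ the shifted points $\sigma^{i}\tau$ and $\sigma^{i}\tau'$ agree on their first $n-i\ge1$ coordinates, so that $\left|\sigma^{i}\tau\wedge\sigma^{i}\tau'\right|\ge n-i$ and hence $d_{\alpha}\left(\sigma^{i}\tau,\sigma^{i}\tau'\right)\le\e^{-\alpha\left(n-i\right)}$. Because the two iterates share at least their first coordinate, the H\"older estimate applies and yields $\left|f\left(\sigma^{i}\tau\right)-f\left(\sigma^{i}\tau'\right)\right|\le V_{\alpha}\left(f\right)\e^{-\alpha\left(n-i\right)}$. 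Substituting this into the bound above and reindexing by $k=n-i$ turns the right-hand side into a geometric series,
\[
\left|S_{n}f\left(\tau\right)-S_{n}f\left(\tau'\right)\right|\le V_{\alpha}\left(f\right)\sum_{k=1}^{n}\e^{-\alpha k}\le\frac{V_{\alpha}\left(f\right)}{\e^{\alpha}-1},
\]
which is finite and, crucially, independent of $n$, $\omega$, $\tau$ and $\tau'$. Setting $C_{f}:=\exp\bigl(V_{\alpha}\left(f\right)/\left(\e^{\alpha}-1\right)\bigr)$ gives $C_{f}\ge1$ and completes the argument.

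I do not expect any genuine obstacle here, as the result is elementary. The only point requiring care is the bookkeeping of how many coordinates $\sigma^{i}\tau$ and $\sigma^{i}\tau'$ continue to share as $i$ grows: it is exactly this count $n-i$ that produces the exponentially decaying summands and thereby guarantees that the geometric series converges to a bound free of the word length $n$. Everything else is a routine application of the H\"older seminorm $V_{\alpha}\left(f\right)$ together with the definition of the metric $d_{\alpha}$.
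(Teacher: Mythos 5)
Your proof is correct and complete: the coordinate count $\left|\sigma^{i}\tau\wedge\sigma^{i}\tau'\right|\ge n-i\ge1$ legitimately invokes the H\"older seminorm (which only controls pairs sharing the first symbol), and the geometric series gives the uniform bound $V_{\alpha}\left(f\right)/\left(\e^{\alpha}-1\right)$. The paper offers no proof of its own, merely citing \cite[Lemma 2.3.1]{MR2003772}, and your argument is exactly the standard one found there, so there is nothing to compare beyond noting agreement.
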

We will make use of the following topological mixing properties for
Markov shifts. 

\begin{defn} \label{mixing-definitions}Let $\Sigma$ be a Markov
shift with alphabet $I\subset\N$. 
\begin{itemize}
\item $\Sigma$ is \emph{irreducible} if, for all $i,j\in I$, there exists
$\omega\in\Sigma^{*}$ such that $i\omega j\in\Sigma^{*}$.
\item $\Sigma$ is \emph{topologically mixing} if, for all $i,j\in I$,
there exists $n_{0}\in\N$ with the property that, for all $n\ge n_{0}$,
there exists $\omega\in\Sigma^{n}$ such that $i\omega j\in\Sigma^{*}$.
\item $\Sigma$ is \emph{finitely primitive} if there exists $l\in\N$ and
a finite set $\Lambda\subset\Sigma^{l}$ with the property that, for
all $i,j\in I$, there exists $\omega\in\Lambda$ such that $i\omega j\in\Sigma^{*}$.
 
\end{itemize}
\end{defn} 
\begin{rem*}
$\Sigma$ is finitely primitive if and only if $\Sigma$ is topologically
mixing and if $\Sigma$ satisfies the big images and preimages property
(see \cite{MR1955261}). 
\end{rem*}
The following definition of the Gurevi\v{c} pressure was introduced
by Sarig (\cite[Definition 1]{MR1738951}) and extends the notion
of the Gurevi\v{c} entropy (\cite{MR0263162,MR0268356}) corresponding
to $\varphi=0$. 

\begin{defn} Let $\Sigma$ be a Markov shift with alphabet $I$ and
left shift $\sigma:\Sigma\rightarrow\Sigma$. Let $f:\Sigma\rightarrow\R$
be Hölder continuous. For each $a\in I$ and $n\in\N$, we set 
\[
Z_{n}\left(f,a,\sigma\right):=\sum_{x\in\Sigma,x_{1}=a,\sigma^{n}\left(x\right)=x}\e^{S_{n}f\left(x\right)}\quad\text{and}\quad Z_{n}^{*}\left(f,a,\sigma\right):=\sum_{x\in\Sigma,x_{1}=a,\sigma^{n}\left(x\right)=x,x_{2},\dots,x_{n}\neq a}\e^{S_{n}f\left(x\right)}.
\]
If $\Sigma$ is irreducible, then the \emph{Gurevi\v{c} pressure}
of $f$ with respect to $\sigma$ is for each $a\in I$ given by 
\[
\mathcal{P}\left(f,\sigma\right):=\limsup_{n\rightarrow\infty}\frac{1}{n}\log Z_{n}\left(f,a,\sigma\right).
\]
If $\Sigma$ is reducible, then we define 
\[
\mathcal{P}\left(f,\sigma\right):=\sup_{V\in\mathcal{V}}\mathcal{P}\left(f\big{|}_{V},\sigma\big{|}_{V}\right),
\]
where $\mathcal{V}$ denotes the set of irreducible components of
$\Sigma$.

\end{defn} 
\begin{rem}
[\cite{JaerischKessebohmer10}, Fact 3.1] Let $\Sigma$ be a Markov
shift with alphabet $I$ and left shift $\sigma:\Sigma\rightarrow\Sigma$.
If $\Sigma$ is irreducible and $f$ is Hölder continuous, then $\mathcal{P}\left(f,\sigma\right)$
is independent of $a\in I$. 
\end{rem}

The next definition goes back to the work of Ruelle and Bowen (cf.
\cite{MR0289084}, \cite{bowenequilibriumMR0442989}).
\begin{defn}
[Gibbs measure] \label{gibbs-measure}Let $f:\Sigma\rightarrow\R$
be Hölder continuous with $\mathcal{P}\left(f,\sigma\right)<\infty$.
We say that a Borel probability measure \emph{$\mu$ }is a\emph{ Gibbs
measure for $f$ }if there exists a constant $C>0$ such that 
\begin{equation}
C^{-1}\le\frac{\mu\left(\left[\omega\right]\right)}{\e^{S_{\left|\omega\right|}f\left(\tau\right)-\left|\omega\right|\mathcal{P}\left(f,\sigma\right)}}\le C,\mbox{ for all }\omega\in\Sigma^{*}\mbox{ and }\tau\in\left[\omega\right].\label{eq:gibbs-equation}
\end{equation}

\end{defn}
The following necessary and sufficient condition for the existence
of Gibbs measures is taken from \cite[Theorem 1]{MR1955261}. The
uniqueness part also follows from \cite[Theorem 2.2.4]{MR2003772}.
The existence of a $\sigma$-invariant Gibbs measure on a finitely
primitive Markov shift follows from \cite{MR2003772}.
\begin{thm}
[Existence of Gibbs measures] \label{thm:existence-of-gibbs-measures}Let
$\Sigma$ be topologically mixing and let $f:\Sigma\rightarrow\R$
be Hölder continuous. Then there exists a $\sigma$-invariant Gibbs
measure for $f$ if and only if $\Sigma$ is finitely primitive\textup{
and $\mathcal{P}\left(f,\sigma\right)<\infty$. If a }$\sigma$-invariant
Gibbs measure for $f$ exists, then it is unique. 
\end{thm}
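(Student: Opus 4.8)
The plan is to prove the two implications separately, using the equivalence recorded in the remark after Definition~\ref{mixing-definitions}: since $\Sigma$ is topologically mixing, finite primitivity is the same as the big images and preimages (BIP) property, namely the existence of a finite set $\Lambda\subset I$ such that every $i\in I$ has both a predecessor and a successor inside $\Lambda$. Since the finiteness of $\mathcal{P}(f,\sigma)$ is built into Definition~\ref{gibbs-measure}, in the forward implication it remains only to produce such a $\Lambda$.

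For the direction ``existence $\Rightarrow$ finite primitivity'', let $\mu$ be a $\sigma$-invariant Gibbs measure and write $P:=\mathcal{P}(f,\sigma)$. I would use $\asymp$ for equality up to a multiplicative constant that is uniform in all parameters (coming from the Gibbs constant $C$ and the distortion constant $C_{f}$ of Fact~\ref{fact-bounded-distortion-property}). The Gibbs inequality \eqref{eq:gibbs-equation} gives $\mu([a])\asymp\e^{\sup_{[a]}f-P}$ and $\mu([ab])\asymp\e^{\sup_{[a]}f+\sup_{[b]}f-2P}$. Summing the second estimate over the partition $[a]=\bigsqcup_{b:a(a,b)=1}[ab]$ and comparing with the first yields
\[
\sum_{b:a(a,b)=1}\e^{\sup_{[b]}f}\asymp\e^{P}\qquad\text{uniformly in }a,
\]
which uses the Gibbs property only. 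Invoking $\sigma$-invariance through $\mu([\omega])=\mu(\sigma^{-1}[\omega])=\sum_{i:a(i,\omega_{1})=1}\mu([i\omega])$ together with the same estimates gives the dual identity
\[
\sum_{i:a(i,j)=1}\e^{\sup_{[i]}f}\asymp\e^{P}\qquad\text{uniformly in }j.
\]
Since $1=\mu(\Sigma)=\sum_{a\in I}\mu([a])\asymp\e^{-P}\sum_{a\in I}\e^{\sup_{[a]}f}$, the series $\sum_{a\in I}\e^{\sup_{[a]}f}$ converges; choosing a finite $\Lambda\subset I$ whose complementary tail is smaller than half of the uniform lower bound appearing in the two displays, one sees that each sum restricted to $\Lambda$ remains strictly positive, so every state has a successor and a predecessor in $\Lambda$. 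This is exactly BIP, hence finite primitivity.

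For the converse, ``finite primitivity and $P<\infty$ $\Rightarrow$ existence'', the plan is to run the Ruelle--Perron--Frobenius construction for the BIP setting (cf.\ \cite{MR2003772,MR1955261}). First I would produce a \emph{conformal measure}, i.e.\ a Borel probability measure $\nu$ with $\mathcal{L}_{f}^{*}\nu=\e^{P}\nu$, by a Schauder--Tychonoff fixed-point argument applied to the normalisation $\nu\mapsto\mathcal{L}_{f}^{*}\nu/(\mathcal{L}_{f}^{*}\nu)(\1)$ on the weak$^{*}$-compact set of probability measures, where finite primitivity together with $P<\infty$ supplies the uniform bounded-distortion estimates that keep the normalised images in a tight set. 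The upper Gibbs bound $\nu([\omega])\le C\,\e^{S_{|\omega|}f(\tau)-|\omega|P}$ is immediate from conformality, and the matching lower bound follows by appending a connecting word from the finite witness set $\Lambda$, so $\nu$ is Gibbs. Next I would build a H\"older eigenfunction $h:\Sigma\rightarrow\R^{+}$ with $\mathcal{L}_{f}h=\e^{P}h$ and $0<\inf h\le\sup h<\infty$ as a (Ces\`aro or subsequential) limit of $\e^{-nP}\mathcal{L}_{f}^{n}\1$, which finite primitivity keeps uniformly bounded above and below and equicontinuous. The normalised measure $\mu:=h\,d\nu/\nu(h)$ is then $\sigma$-invariant, by the adjoint computation $\int g\circ\sigma\,d\mu=\e^{-P}\int\mathcal{L}_{f}((g\circ\sigma)h)\,d\nu=\e^{-P}\int g\,\mathcal{L}_{f}(h)\,d\nu=\int g\,d\mu$, and it inherits the Gibbs property from $\nu$ because $h$ is bounded away from $0$ and $\infty$.

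Finally, for uniqueness I would observe that any two $\sigma$-invariant Gibbs measures $\mu_{1},\mu_{2}$ satisfy $\mu_{1}([\omega])/\mu_{2}([\omega])\in[C^{-2},C^{2}]$ for all cylinders, hence are boundedly equivalent, so $d\mu_{1}/d\mu_{2}$ is a bounded $\sigma$-invariant density; ergodicity of the Gibbs measure (which emerges from the RPF construction, cf.\ the exactness results of \cite{MR1107025}) forces this density to be constant, giving $\mu_{1}=\mu_{2}$. I expect the main obstacle to be the existence step: on the non-compact infinite-alphabet shift the conformal eigenmeasure and the eigenfunction must be manufactured by hand, and it is precisely here that finite primitivity is indispensable, as it provides the tightness and the uniform two-sided control of $\e^{-nP}\mathcal{L}_{f}^{n}\1$ without which neither the fixed-point argument nor the equicontinuity would close.
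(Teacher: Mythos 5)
The paper does not prove this theorem: it is imported from Sarig \cite[Theorem 1]{MR1955261}, with uniqueness from \cite[Theorem 2.2.4]{MR2003772}, so there is no internal proof to compare against. Judged on its own, your sketch is essentially a correct reconstruction of the argument in those references. The forward direction is the strongest part: the Gibbs inequality plus bounded distortion do give $\mu([ab])\asymp\e^{\sup_{[a]}f+\sup_{[b]}f-2P}$, summing over successors and comparing with $\mu([a])\asymp\e^{\sup_{[a]}f-P}$ gives the uniform successor estimate, $\sigma$-invariance gives the dual predecessor estimate, and since $\sum_{a}\e^{\sup_{[a]}f}\asymp\e^{P}<\infty$ the tail-truncation produces a finite $\Lambda$ witnessing BIP; together with the assumed topological mixing and the remark after Definition \ref{mixing-definitions} this is finite primitivity. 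Two soft spots in the remainder deserve flagging, though neither is fatal. First, in the existence step, ``Schauder--Tychonoff on the weak$^{*}$-compact set of probability measures'' is not available verbatim: for infinite $I$ the space $\Sigma$ is not compact and the set of Borel probability measures is not weak$^{*}$-compact, so you must either prove a uniform tightness estimate for the normalised iterates (which finite primitivity and $\sum_{i}\e^{\sup_{[i]}f}<\infty$ do supply, but this is precisely where the cited proofs spend their effort) or, as in \cite{MR2003772}, obtain the eigenmeasure as a weak limit of conformal measures of finite subsystems; you correctly identify this as the main obstacle but leave it unproved. Second, in the uniqueness step the bounded ratio on cylinders does extend from the generating semiring to all Borel sets, giving $C^{-2}\mu_{2}\le\mu_{1}\le C^{2}\mu_{2}$, but the conclusion requires the ergodicity of at least one invariant Gibbs measure; this is a genuine ingredient (following from the Gibbs property or from the exactness results of \cite{MR1107025}) and should be stated and proved as a separate lemma rather than absorbed into the phrase ``emerges from the RPF construction''.
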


\subsection{Recurrent potentials\label{sub:Recurrent-potentials}}

In this subsection we recall the definition of a recurrent potential,
which was introduced by Sarig for Hölder continuous potentials on
a topologically mixing countable state Markov shift (\cite[Definition 1]{MR1818392}).
This notion generalises the notion of recurrence for Markov chains
and the definition of $R$-recurrence for positive matrices by Vere-Jones
(\cite{MR0141160}). In this paper we adapt the definition of recurrence
to Hölder continuous potentials on an irreducible Markov shift, which
is not necessarily topologically mixing. One can easily verify that
also in this case, the notion of recurrence is independent of the
choice of $a\in I$. 
\begin{defn}
 \label{definition-recurrence-pos-null}Let $\Sigma$ be an irreducible
Markov shift with alphabet $I$ and left shift $\sigma:\Sigma\rightarrow\Sigma$.
Let $f:\Sigma\rightarrow\R$ be Hölder continuous with $\mathcal{P}\left(f,\sigma\right)<\infty$.
We say that 
\[
f\mbox{ is \emph{recurrent} if }\sum_{n\in\N}\e^{-n\mathcal{P}\left(f,\sigma\right)}Z_{n}\left(f,a,\sigma\right)=\infty,\mbox{ for some (hence all) }a\in I,
\]
and we say that $f$ is \emph{transient}, otherwise. Moreover, if
$f$ is recurrent, then we say that 
\[
f\mbox{ is \emph{positive recurrent }if }\sum_{n\in\N}n\e^{-n\mathcal{P}\left(f,\sigma\right)}Z_{n}^{*}\left(f,a,\sigma\right)<\infty,\mbox{ for some (hence all) }a\in I,
\]
\[
f\mbox{ is \emph{null recurrent} if }\sum_{n\in\N}n\e^{-n\mathcal{P}\left(f,\sigma\right)}Z_{n}^{*}\left(f,a,\sigma\right)=\infty,\mbox{ for some (hence all) }a\in I.
\]

\end{defn}
If an irreducible Markov shift $\Sigma$ has period $p\in\N$, then
it is convenient to study the $p$-th iterate of the dynamics. The
next remark shows how this is reflected in pressure and recurrence.
\begin{rem}
\label{periodicity-aperiodic-remark}Let $\Sigma$ be an irreducible
Markov shift with alphabet $I$, incidence matrix $A=\left(a\left(i,j\right)\right)$
and left shift $\sigma:\Sigma\rightarrow\Sigma$. If $\Sigma$ has
period $p\in\N$, then we consider $\sigma^{p}:\Sigma\rightarrow\Sigma$,
which is conjugated to the left shift $\sigma^{\left(p\right)}:\Sigma^{\left(p\right)}\rightarrow\Sigma^{\left(p\right)}$
on the Markov shift $\Sigma^{\left(p\right)}$ via the canonical bijection
$\iota:\Sigma^{\left(p\right)}\rightarrow\Sigma$, where $\Sigma^{\left(p\right)}$
is the Markov shift with the alphabet $\Sigma^{p}$, for which $\omega,\tau\in\Sigma^{p}$
are admissible if $a\left(\omega_{p},\tau_{1}\right)=1$. Let $f^{\left(p\right)}:\Sigma^{\left(p\right)}\rightarrow\R$
be given by $f^{\left(p\right)}:=S_{p}f\circ\iota$ and denote the
irreducible components of $\Sigma^{\left(p\right)}$ by $\mathcal{V}$.
One easily verifies that, for each $V\in\mathcal{V}$, 
\[
\mathcal{P}\left(f^{\left(p\right)},\sigma^{\left(p\right)}\right)=\mathcal{P}\left(f^{\left(p\right)}\big|_{V},\sigma^{\left(p\right)}\big|_{V}\right)=p\mathcal{P}\left(f,\sigma\right)
\]
and that $f^{\left(p\right)}\big|_{V}$ is (positive) recurrent if
and only if $f$ is (positive) recurrent. 
\end{rem}
The following was proved by Sarig (\cite[Theorem 1,  Remark 2, Proposition 1, Proposition 3]{MR1818392}).
\begin{thm}
\label{thm:sarig-recurrence-characterisation} Let $\Sigma$ be topologically
mixing Markov shift and let $f:\Sigma\rightarrow\R$ be Hölder continuous
with $\mathcal{P}\left(f,\sigma\right)<\infty$. Then $f$ is recurrent
if and only if there exists $\rho>0$ and a conservative measure $\nu$
on $\Sigma$, positive and finite on cylindrical sets, which satisfies
$\mathcal{L}_{f}^{*}\left(\nu\right)=\rho\nu$. In this case, $\log\rho=\mathcal{P}\left(f,\sigma\right)$
and there exists a continuous function $h:\Sigma\rightarrow\R^{+}$,
such that $\mathcal{L}_{f}\left(h\right)=\rho h$ and such that $\log h$
and $\log h\circ\sigma$ are Hölder continuous. Moreover, $\nu$ is
the unique measure (up to a constant multiple), which is fixed by
$\e^{-\mathcal{P}\left(f,\sigma\right)}\mathcal{L}_{f}^{*}$ and which
is positive and finite on cylindrical sets, and $h$ is the unique
positive continuous function (up to a constant multiple), which is
fixed by $\e^{-\mathcal{P}\left(f,\sigma\right)}\mathcal{L}_{f}$
and which is bounded on cylindrical sets. Furthermore, we have that
$f$ is positive recurrent if and only if $\int h\, d\nu<\infty$. \end{thm}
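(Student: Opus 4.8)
The plan is to compensate for the lack of a spectral gap of $\mathcal{L}_f$ on the infinite shift by inducing on a single cylinder, where the transfer operator becomes well behaved, and then to transport the resulting eigendata back to $\Sigma$. Fix a state $a\in I$, let $R$ be the first return time to $[a]$, and consider the first return map $\sigma_a:=\sigma^{R}:[a]\rightarrow[a]$. This map is topologically conjugate to the full shift over the countable alphabet $W$ of first-return loops at $a$, and by Fact \ref{fact-bounded-distortion-property} the induced potential $\bar f:=S_R f$ is H\"older continuous on the induced shift. Set $\rho:=\e^{\mathcal{P}(f,\sigma)}$ and $\bar f_\rho:=\bar f-(\log\rho)R$. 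A first-return decomposition of loops gives the renewal identity $Z_n(f,a,\sigma)=\sum_{k=1}^{n}Z_k^{*}(f,a,\sigma)Z_{n-k}(f,a,\sigma)$, and summing the induced partition functions over the $n$-fold excursion structure shows that the total weight of induced $n$-loops equals $\left(\sum_k\e^{-k\mathcal{P}(f,\sigma)}Z_k^{*}(f,a,\sigma)\right)^{n}$. Hence, in generating-function form, $f$ is recurrent precisely when $\sum_k\e^{-k\mathcal{P}(f,\sigma)}Z_k^{*}(f,a,\sigma)=1$ and transient when this sum is strictly less than $1$; equivalently, $f$ is recurrent if and only if the induced Gurevi\v{c} pressure satisfies $\mathcal{P}(\bar f_\rho,\sigma_a)=0$.

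For the forward implication, assume $f$ recurrent, so that $\mathcal{P}(\bar f_\rho,\sigma_a)=0$. The induced shift is a countable full shift and is therefore finitely primitive, so Theorem \ref{thm:existence-of-gibbs-measures} and \cite[Theorem 2.4.3]{MR2003772} furnish a conformal eigenmeasure $\bar\nu$ with $\mathcal{L}_{\bar f_\rho}^{*}\bar\nu=\bar\nu$ and a bounded H\"older eigenfunction $\bar h$ with $\mathcal{L}_{\bar f_\rho}\bar h=\bar h$, together with the induced invariant Gibbs probability $\bar\mu=\bar h\,\bar\nu$. I would then spread this data over the Kac tower built on $[a]$: define $\nu$ by placing $\bar\nu$ on $[a]$ and distributing it along forward excursions with the weights dictated by $\rho^{-1}\mathcal{L}_f$, and define $h$ on $\Sigma$ through the last-exit decomposition from $[a]$ applied to $\bar h$. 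A direct computation using $\mathcal{L}_{\bar f_\rho}^{*}\bar\nu=\bar\nu$ and $\mathcal{L}_{\bar f_\rho}\bar h=\bar h$ then yields $\mathcal{L}_f^{*}\nu=\rho\nu$ and $\mathcal{L}_f h=\rho h$; that $\nu$ is positive and finite on cylinders is immediate from the construction, and Fact \ref{fact-bounded-distortion-property} propagates the H\"older regularity of $\bar h$ to show that $\log h$ and $\log h\circ\sigma$ are H\"older continuous. Conservativity of $\nu$ follows because $\bar\nu$-almost every point of $[a]$ returns, so the induced system carries the invariant probability $\bar\mu$ and the tower is conservative.

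For the converse, suppose $\nu$ is a conservative measure, positive and finite on cylinders, with $\mathcal{L}_f^{*}\nu=\rho\nu$. Iterating gives the conformality estimate that $\nu([\omega])$ is comparable, up to a bounded factor, to $\rho^{-|\omega|}\e^{S_{|\omega|}f(\tau)}$ for $\tau\in[\omega]$, by Fact \ref{fact-bounded-distortion-property}; summing over loops at $a$ then forces $\tfrac1n\log Z_n(f,a,\sigma)\rightarrow\log\rho$, that is $\mathcal{P}(f,\sigma)=\log\rho$. Conservativity of $\nu$ is equivalent, by Hopf's criterion for the transfer operator, to the divergence of $\sum_n\rho^{-n}\mathcal{L}_f^{n}\1_{[a]}$ almost everywhere on $[a]$, and the same conformality comparison identifies this with $\sum_n\e^{-n\mathcal{P}(f,\sigma)}Z_n(f,a,\sigma)=\infty$, i.e. recurrence. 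Uniqueness of $\nu$ up to a constant multiple then follows from ergodicity of the conservative conformal measure, for which I would invoke the exactness and pointwise dual ergodicity of Aaronson, Denker and Urba\'{n}ski (\cite{MR1107025}); uniqueness of $h$ among positive continuous functions bounded on cylinders follows from the uniqueness of the induced eigenfunction $\bar h$ together with the last-exit construction. Finally, Kac's formula applied to the tower gives $\int_\Sigma h\,d\nu=\int_{[a]}R\,\bar h\,d\bar\nu$, and by Fact \ref{fact-bounded-distortion-property} the right-hand side is finite exactly when $\sum_n n\,\e^{-n\mathcal{P}(f,\sigma)}Z_n^{*}(f,a,\sigma)<\infty$; thus $\int h\,d\nu<\infty$ if and only if $f$ is positive recurrent.

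The main obstacle is precisely the absence of a spectral gap for $\mathcal{L}_f$ on the infinite shift: all spectral theory must be carried out on the induced full shift, where the criticality condition $\mathcal{P}(\bar f_\rho,\sigma_a)=0$ restores summability and hence the usual Ruelle--Perron--Frobenius machinery, while the genuinely delicate work lies in transporting the induced eigenmeasure and eigenfunction back to $\Sigma$ with the correct conformality, conservativity and H\"older control, and in establishing the uniqueness statements within the $\sigma$-finite, merely conservative, setting.
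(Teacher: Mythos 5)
The paper itself does not prove this theorem: it is quoted from Sarig \cite[Theorem 1, Remark 2, Propositions 1 and 3]{MR1818392}, whose eigenmeasure is built by a Patterson-type limit along the series $\sum_n \e^{-n\mathcal{P}(f,\sigma)}Z_n(f,a,\sigma)$, which diverges at the critical value precisely by recurrence, with conservativity, exactness and pointwise dual ergodicity supplied by \cite{MR1107025}; the only argument the paper supplies is the uniqueness of $h$, via the remark that $h\,d\nu$ is an invariant $\sigma$-finite measure absolutely continuous with respect to the conservative ergodic $\nu$ \cite[Theorem 1.5.6]{MR1450400}. Your inducing route is therefore genuinely different and in principle viable, but it contains two concrete errors. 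First, the renewal identity $Z_n(f,a,\sigma)=\sum_{k=1}^{n}Z_k^{*}(f,a,\sigma)Z_{n-k}(f,a,\sigma)$ is exact only for locally constant potentials. For H\"older $f$, cutting a periodic orbit at its first return to $a$ changes the weight of the initial block: $S_kf(x)$ is evaluated along all of $x$, not along the period-$k$ point obtained by repeating $x_1\cdots x_k$, and Fact \ref{fact-bounded-distortion-property} only gives comparability up to $C_f$ per excursion, so a $j$-block decomposition carries errors of order $C_f^{\pm j}$. Hence the dichotomy ``recurrent iff $\sum_k\e^{-k\mathcal{P}(f,\sigma)}Z_k^{*}=1$, transient iff $<1$'' is false in general; the critical value of that sum is only pinned down to the interval $[C_f^{-1},C_f]$. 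The criterion you actually use afterwards, namely that $f$ is recurrent iff $\mathcal{P}(\bar f_\rho,\sigma_a)=0$, is true, but it is not ``equivalent'' to the false identity and needs its own proof: the sup-weighted induced partition sums $\bar Z_j$ are exactly submultiplicative, so $\mathcal{P}(\bar f_\rho,\sigma_a)=\inf_j\frac{1}{j}\log\bar Z_j$; finiteness of $\mathcal{P}(f,\sigma)=\log\rho$ forces this infimum to be $\le 0$ and also yields, by a concatenation argument, the summability $\sum_{w}\e^{\sup\bar f_{\rho}|_{[w]}}\le C_f<\infty$ that you tacitly need before citing Theorem \ref{thm:existence-of-gibbs-measures} and \cite[Theorem 2.4.3]{MR2003772}; finally, $\inf_j\frac{1}{j}\log\bar Z_j=0$ gives $\bar Z_j\ge1$ for all $j$ and hence divergence, while a negative infimum gives a geometrically convergent tail, and a single application of Fact \ref{fact-bounded-distortion-property} (one uniform constant, not one per block) identifies $\sum_j\bar Z_j$ with $\sum_n\e^{-n\mathcal{P}(f,\sigma)}Z_n(f,a,\sigma)$ up to a bounded factor.

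Second, uniqueness of $\nu$ does not follow from ergodicity: two conservative ergodic eigenmeasures could a priori be mutually singular, and ergodicity of each says nothing about proportionality, so invoking \cite{MR1107025} at that point leaves a genuine hole. What does work in your framework is to show that \emph{every} conservative eigenmeasure agrees with your tower measure: conservativity makes the first-return decomposition of $\nu|_{[a]}$ converge and shows that $\nu|_{[a]}$ is fixed by $\mathcal{L}_{\bar f_\rho}^{*}$ on the induced full shift; uniqueness there (\cite[Theorem 2.2.4]{MR2003772}) pins down $\nu|_{[a]}$, and conservativity again ($\nu$-almost every point enters $[a]$) recovers $\nu$ from $\nu|_{[a]}$ through the tower. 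Your uniqueness of $h$ has the parallel gap: you must prove that any positive continuous eigenfunction bounded on cylinders restricts to an induced eigenfunction, i.e.\ that the last-exit series converges to it, whereas the paper's remark settles this in two lines from invariance of $h\,d\nu$ and \cite[Theorem 1.5.6]{MR1450400}, plus continuity and positivity of $\nu$ on cylinders. The remaining ingredients — the tower transport of $\bar\nu$ and $\bar h$, the H\"older control of $\log h$, the Kac identity $\int h\,d\nu=\int_{[a]}R\,\bar h\,d\bar\nu$ and its comparison with $\sum_n n\,\e^{-n\mathcal{P}(f,\sigma)}Z_n^{*}(f,a,\sigma)$, and your converse direction via conformality of $\nu$ on loop cylinders (using $\sigma^{|\omega|}[\omega]\supseteq[a]$ for loops to force $\log\rho=\mathcal{P}(f,\sigma)$) together with Hopf's criterion — are sound in outline, though they are asserted rather than executed.
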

\begin{rem}
The uniqueness of $h$ for a recurrent potential $f$, as stated in
Theorem \ref{thm:sarig-recurrence-characterisation}, follows because
$h$ defines the invariant and $\sigma$-finite measure $h\, d\nu$,
which is absolutely continuous with respect to the conservative ergodic
measure $\nu$. Therefore, $h$ is unique $\nu$-almost everywhere
(see, for example, \cite[Theorem 1.5.6]{MR1450400}). Since $\nu$
is positive on cylindrical sets and $h$ is continuous, uniqueness
of $h$ follows.
\end{rem}

\subsection{Group extensions and amenability\label{sub:Group-Extensions}}

In this section, we consider a group-extended Markov system $\left(\Sigma\times G,\sigma\rtimes\Psi\right)$,
for a Markov shift $\Sigma$ with alphabet $I\subset\N$, a countable
group $G$ and a semigroup homomorphism $\Psi:I^{*}\rightarrow G$.
Ergodic properties of group extensions given by locally compact abelian
groups have also been studied in \cite{MR1803461,MR1906436}.
\begin{rem}
\label{groupextension-is-Markovshift}$\left(\Sigma\times G,\sigma\rtimes\Psi\right)$
is conjugated to the Markov shift with state space 
\[
\left\{ \left(\left(\omega_{j},g_{j}\right)\right)\in\left(I\times G\right)^{\N}:\omega\in\Sigma\mbox{ and }g_{j}\Psi\left(\omega_{j}\right)=g_{j+1},\,\, j\in\N\right\} .
\]
We do not distinguish between this Markov shift and $\left(\Sigma\times G,\sigma\rtimes\Psi\right)$.
\end{rem}

The following definition is due to von Neumann (\cite{vonNeumann1929amenabledef}). 
\begin{defn}
A discrete group\emph{ $G$ }is\emph{ amenable} if there exists a
finitely additive probability measure $\nu$ on the power set of $G$,
such that $\nu\left(A\right)=\nu\left(g\left(A\right)\right)$, for
all $g\in G$ and $A\subset G$. 
\end{defn}
Let us now state some recent results on amenability for group extensions
of Markov shifts. We will refer to these results in Section \ref{sub:Amenable-groups}
in order to clarify the context of the results obtained in this paper.
The next theorem is due to Stadlbauer (\cite[Theorem 5.4]{Stadlbauer11}).
For locally constant potentials on a finite state Markov shift $\Sigma$,
a similar statement has been proved in \cite[Theorem 1.1]{Jaerisch11a}
using different methods. 
\begin{thm}
\label{thm:stadlbauer-nogap-implies-amenable}Let $\Sigma$ be finitely
primitive and let $\left(\Sigma\times G,\sigma\rtimes\Psi\right)$
be an irreducible\emph{ }group-extended Markov system. Let $\varphi:\Sigma\rightarrow\R$
be Hölder continuous with $\mathcal{P}\left(\varphi,\sigma\right)<\infty$.
If $\mathcal{P}\left(\varphi\circ\pi_{1},\sigma\rtimes\Psi\right)=\mathcal{P}\left(\varphi,\sigma\right)$,
then $G$ is amenable. 
\end{thm}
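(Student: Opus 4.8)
The plan is to deduce amenability from the absence of a spectral gap by way of an extension of Kesten's classical amenability criterion to the transfer operator of the group extension. First I would normalise, replacing $\varphi$ by $\varphi-\mathcal{P}(\varphi,\sigma)$, so that $\mathcal{P}(\varphi,\sigma)=0$; since $\Sigma$ is finitely primitive there is a $\sigma$-invariant Gibbs measure $\mu_{\varphi}$ for $\varphi$ (Theorem \ref{thm:existence-of-gibbs-measures}), together with the H\"older fixed point $h_{1}$ and the conformal measure $\nu_{1}$ of the normalised operator $\widehat{\mathcal{L}}:=\e^{-\mathcal{P}(\varphi,\sigma)}\mathcal{L}_{\varphi}$. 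The hypothesis $\mathcal{P}(\varphi\circ\pi_{1},\sigma\rtimes\Psi)=\mathcal{P}(\varphi,\sigma)$ can then be rephrased, using the partition-function definition of the Gurevi\v{c} pressure on the group-extended Markov shift (Remark \ref{groupextension-is-Markovshift}), as the statement that the \emph{return sums}
\[
R_{n}:=\sum_{\omega\in\Sigma^{n}:\,\Psi(\omega)=e}\e^{\sup S_{n}\varphi_{|[\omega]}}
\]
grow at the full exponential rate, i.e. $\limsup_{n}\frac{1}{n}\log R_{n}=\mathcal{P}(\varphi,\sigma)=0$, where $e$ denotes the identity of $G$.

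Next I would set up the Hilbert-space criterion for amenability. By a standard characterisation (Hulanicki's theorem), $G$ is amenable if and only if the left regular representation $\lambda$ on $\ell^{2}(G)$ almost has invariant vectors, that is, there is a sequence of unit vectors $\xi_{n}\in\ell^{2}(G)$ with $\|\lambda(s)\xi_{n}-\xi_{n}\|_{2}\to0$ for every $s\in G$. The candidates would be built from the finite-window transfer data: for each $g\in G$ put $w_{n}(g):=\sum_{\omega\in\Sigma^{n}:\,\Psi(\omega)=g}\e^{\sup S_{n}\varphi_{|[\omega]}}$ and set $\xi_{n}:=\|w_{n}^{1/2}\|_{2}^{-1}\,w_{n}^{1/2}$. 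The task is to show that the full growth rate of the return sums $R_{n}=w_{n}(e)$ forces these $\xi_{n}$ to become asymptotically translation invariant. The bridge between the two is quasi-multiplicativity of the weights $w_{n}$, which I would extract from the Gibbs property (\ref{eq:gibbs-equation}), the bounded distortion property (Fact \ref{fact-bounded-distortion-property}) and the big images and preimages property underlying finite primitivity: concatenation of admissible words and the mixing of $\Sigma$ give two-sided comparisons $w_{m+n}(gh)\gtrsim w_{m}(g)\,w_{n}(h)$, up to a multiplicative constant and a bounded loss in length. Pairing these comparisons through Cauchy--Schwarz yields, whenever the diagonal $w_{n}(e)$ attains the full rate, that $\langle\lambda(s)\xi_{n},\xi_{n}\rangle\to1$ for every fixed $s\in G$; since the $\xi_{n}$ are unit vectors this is exactly almost-invariance, and amenability follows.

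The hard part will be that Kesten's argument is classically available only for symmetric (self-adjoint) operators, whereas $\varphi$ is an arbitrary H\"older potential and the operator induced on $\ell^{2}(G)$ need not be self-adjoint; the inequality $w_{m+n}(gh)\gtrsim w_{m}(g)w_{n}(h)$ alone does not pin the spectral radius from below without a symmetry input. I would resolve this by passing to the self-adjoint symmetrisation $\widehat{\mathcal{L}}^{*}\widehat{\mathcal{L}}$, equivalently by working in $L^{2}(\mu_{\varphi})$ where the adjoint of the normalised transfer operator is the associated Koopman operator, whose spectral radius is again governed by $\mathcal{P}(\varphi,\sigma)$ and whose return sums are comparable to $R_{n}$ up to the distortion constant; the Hilbert-space almost-invariant-vector construction then applies to this self-adjoint operator. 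The genuine technical crux is to make all the comparison constants in the quasi-multiplicativity uniform in the group direction, which is precisely what the big images and preimages property supplies, and which is why finite primitivity of $\Sigma$, rather than mere topological mixing, is assumed.
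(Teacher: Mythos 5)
This theorem is not proved in the paper: it is quoted from Stadlbauer \cite[Theorem 5.4]{Stadlbauer11} (with the locally constant, finite--alphabet case attributed to \cite[Theorem 1.1]{Jaerisch11a}), so there is no in--paper argument to compare against. Your outline does match the strategy of those cited proofs in broad terms: rephrase the hypothesis as subexponential decay of the return weights $w_{n}(e)$ relative to $\|w_{n}\|_{1}\asymp\e^{n\mathcal{P}(\varphi,\sigma)}$ (which follows from the Gibbs property), obtain quasi--multiplicativity of the $w_{n}$ from finite primitivity and bounded distortion, and conclude through a Kesten--Day--Hulanicki criterion on $\ell^{2}(G)$.

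The gap is that the one step carrying all the difficulty is asserted rather than proved. Near--equality in Cauchy--Schwarz for $\xi_{n}=w_{n}^{1/2}/\|w_{n}^{1/2}\|_{2}$ is precisely the statement that $w_{n}(s^{-1}\cdot)$ and $w_{n}(\cdot)$ are asymptotically proportional in a weighted $\ell^{1}$ sense (an $\ell^{2}$--Reiter property for $\sqrt{w_{n}}$); this does not follow from subexponential decay of the single diagonal value $w_{n}(e)$ together with quasi--multiplicativity by any soft argument, and establishing a workable substitute (via operator norms of adapted measures on $\ell^{2}(G)$, a uniformised version of Day's theorem, and averaged rather than raw weights) is exactly the content of \cite{Stadlbauer11}. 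Your symmetrisation device also does not function as described: on $L^{2}(\mu_{\varphi}\times\lambda)$ the adjoint of the normalised transfer operator of the extension is the Koopman operator, so $\widehat{\mathcal{L}}\widehat{\mathcal{L}}^{*}=\mathrm{id}$ and $\widehat{\mathcal{L}}^{*}\widehat{\mathcal{L}}$ is a conditional expectation of norm one for \emph{every} $G$, amenable or not; the informative quantity is the diagonal matrix coefficient, and for the symmetrised object this is $\sum_{g}w_{n}(g)^{2}$, whereas $w_{2n}(e)\asymp\sum_{g}w_{n}(g)w_{n}(g^{-1})$. These are two--sidedly comparable only under a symmetry--on--average type condition, which Proposition \ref{prop:fullpressure-symmetric-on-average} and Example \ref{recurrent-non-symmetric-example} show can fail; only the one--sided bound $w_{2n}(e)\le\sum_{g}w_{n}(g)^{2}$ comes for free from Cauchy--Schwarz (it happens to be the direction one needs, but your write--up claims comparability ``up to the distortion constant'' and does not isolate this). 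Even granting it, one only obtains a uniform positive lower bound on the norms of the associated convolution operators, not convergence to $1$, so a further argument is needed before Day's criterion applies. Finally, the theorem assumes the extension merely irreducible, so one must first pass to the finite--index subgroup $G_{0}$ and the $p$--th power system as in Remark \ref{irreducible-remark}; this is minor but absent from your sketch.
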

The next theorem, which provides a converse of the previous theorem,
is taken from \cite[Corollary 1.6]{Jaerisch12c}. Under slightly different
assumptions, the theorem was proved in \cite[Theorem 5.3.11]{JaerischDissertation11}
and independently, by Stadlbauer in \cite[Theorem 4.1]{Stadlbauer11}. 
\begin{defn}
\label{def:asymptotically-symmetric}We say that $\varphi$ is \emph{asymptotically
symmetric with respect to $\Psi$ }(\cite[Definition 1.3]{Jaerisch12c})
if there exist $n_{0}\in\N$ and sequences $\left(c_{n}\right)_{n\in\N}$
and $\left(N_{n}\right)_{n\in\N}$ with the property that $\lim_{n}\left(c_{n}\right)^{1/n}=1$,
$\lim_{n}n^{-1}N_{n}=0$ and such that, for each $g\in G$ and $n\ge n_{0}$,
we have 
\[
\sum_{\omega\in\Sigma^{n}:\Psi\left(\omega\right)=g}\e^{\sup S_{n}\varphi_{|\left[\omega\right]}}\le c_{n}\sum_{\omega\in\Sigma^{*}:\Psi\left(\omega\right)=g^{-1},\, n-N_{n}\le\left|\omega\right|\le n+N_{n}}\e^{\sup S_{\left|\omega\right|}\varphi_{|\left[\omega\right]}}.
\]
\end{defn}
\begin{thm}
\label{thm:amenablesymmetric-implies-fullpressure}Let $\Sigma$ be
finitely primitive and let $\left(\Sigma\times G,\sigma\rtimes\Psi\right)$
be an irreducible\emph{ }group-extended Markov system. Let $\varphi:\Sigma\rightarrow\R$
be Hölder continuous with $\mathcal{P}\left(\varphi,\sigma\right)<\infty$.
If $\varphi$ is asymptotically symmetric with respect to $\Psi$
and if $G$ is amenable, then \textup{$\mathcal{P}\left(\varphi\circ\pi_{1},\sigma\rtimes\Psi\right)=\mathcal{P}\left(\varphi\right).$
} 
\end{thm}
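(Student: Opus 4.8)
The plan is to reduce everything to a single uniform comparison estimate for the reduced potential $\varphi_{c}$, and then to read off both implications from the algebraic relation between $\varphi$ and $\varphi_{c}$. Since $\varphi\circ\pi_{1}$ is recurrent, Theorem~\ref{thm:main-theorem} provides the homomorphism $c\colon G\to(\R^{+},\cdot)$ with $\mathcal{P}(\varphi_{c},\sigma)=\mathcal{P}(\varphi\circ\pi_{1},\sigma\rtimes\Psi)=:P$ and with $\mu_{\varphi_{c}}\times\lambda$ conservative (hence ergodic by \cite{MR1107025}), while Corollary~\ref{cor:base-skewproduct-coincide} tells us that $\mathcal{P}(\varphi\circ\pi_{1},\sigma\rtimes\Psi)=\mathcal{P}(\varphi,\sigma)$ holds if and only if $c$ is trivial. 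Hence it suffices to prove that $\varphi$ is symmetric on average with respect to $\Psi$ if and only if $c\equiv 1$.

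First I would rewrite the quotient in Definition~\ref{def:symmetric-on-average}. Because $c$ is a homomorphism, for $\omega\in\Sigma^{k}$ with $\Psi(\omega)=g$ the functions $S_{k}\varphi$ and $S_{k}\varphi_{c}$ differ on $[\omega]$ by the constant $\log c(g)$, so $\sup S_{k}\varphi_{|[\omega]}=\sup S_{k}(\varphi_{c})_{|[\omega]}+\log c(g)$, and $c(g^{-1})=c(g)^{-1}$. Writing $A_{n}(h):=\sum_{k=1}^{n}\e^{-kP}\sum_{\omega\in\Sigma^{k}:\Psi(\omega)=h}\e^{\sup S_{k}(\varphi_{c})_{|[\omega]}}$, the ratio in Definition~\ref{def:symmetric-on-average} becomes exactly $c(g)^{2}\,A_{n}(g)/A_{n}(g^{-1})$. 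Thus $\varphi$ is symmetric on average if and only if $\sup_{g}\limsup_{n}c(g)^{2}A_{n}(g)/A_{n}(g^{-1})<\infty$.

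The heart of the matter is the following uniform comparison, which I would establish for any H\"older $\psi$ with $\mathcal{P}(\psi,\sigma)=P<\infty$ and $\mu_{\psi}\times\lambda$ conservative (and so for $\psi=\varphi_{c}$, and, when $c\equiv1$, for $\psi=\varphi$): defining $A_{n}(g)$ as above but with $\psi$ in place of $\varphi_{c}$, one has $\sup_{g\in G}\limsup_{n\to\infty}A_{n}(g)/A_{n}(g^{-1})<\infty$. To prove it I would first use the Gibbs property of $\mu_{\psi}$ on the finitely primitive shift $\Sigma$ together with the bounded distortion property (Fact~\ref{fact-bounded-distortion-property}) to get, with constants uniform in $k$ and $g$, that $\e^{-kP}\sum_{\omega\in\Sigma^{k}:\Psi(\omega)=g}\e^{\sup S_{k}\psi_{|[\omega]}}\asymp\mu_{\psi}(D_{k}(g))$, where $D_{k}(g):=\{\omega\in\Sigma:\Psi(\omega_{|k})=g\}$. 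Hence $A_{n}(g)$ is comparable, uniformly in $g$, to the occupation sum $\sum_{k=1}^{n}\mu_{\psi}(D_{k}(g))=\sum_{k=1}^{n}(\mu_{\psi}\times\lambda)\big(B_{e}\cap(\sigma\rtimes\Psi)^{-k}B_{g}\big)$ of the group slice $B_{g}:=\Sigma\times\{g\}$, where every slice carries the same mass $(\mu_{\psi}\times\lambda)(B_{g})=1$. The structural facts I would exploit are that, for each fixed $k$, $g\mapsto\mu_{\psi}(D_{k}(g))$ is a probability measure on $G$, and that conservativity and ergodicity of $\mu_{\psi}\times\lambda$ force sure passage: almost every orbit started in $B_{e}$ visits every slice $B_{g}$, so the total first passage mass to each slice equals $1$. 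A renewal/first-passage decomposition of the occupation sums then shows that the occupation of $B_{g}$ and of $B_{g^{-1}}$ is governed by the common return sequence to $B_{e}$; since the latter grows at most linearly (its increments are probabilities, hence bounded by $1$), the first passage delay is negligible and $A_{n}(g)/A_{n}(g^{-1})\to1$, uniformly in $g$ up to the distortion constant.

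Granting this comparison, both implications follow. If $c\equiv1$ then $\varphi_{c}=\varphi$, the factor $c(g)^{2}$ equals $1$, and the comparison (applied to $\psi=\varphi$) gives $\sup_{g}\limsup_{n}A_{n}(g)/A_{n}(g^{-1})<\infty$, so $\varphi$ is symmetric on average. Conversely, if $\varphi$ is symmetric on average then $\sup_{g}\limsup_{n}c(g)^{2}A_{n}(g)/A_{n}(g^{-1})<\infty$; applying the comparison (for $\psi=\varphi_{c}$) to the inverse element yields a uniform lower bound $\liminf_{n}A_{n}(g)/A_{n}(g^{-1})\ge\kappa>0$, whence $c(g)^{2}$ is bounded uniformly in $g$. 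A homomorphism $c\colon G\to(\R^{+},\cdot)$ with $\sup_{g}c(g)<\infty$ must be trivial, since $c(g^{n})=c(g)^{n}$, and hence $c\equiv1$. I expect the main obstacle to be the renewal comparison of the third paragraph: because the process $k\mapsto\Psi(\omega_{|k})$ is not Markov on $G$ alone (the base coordinate retains memory), the first passage argument must be run through the fibrewise transfer operators (the group-graded pieces $\hat{\mathcal{L}}^{(k)}_{g}$ of $\e^{-P}\mathcal{L}_{\psi\circ\pi_{1}}$, in the spirit of Sarig's operator renewal theory) or, equivalently, through the first return map to a reference slice, and the delicate point will be to keep the resulting estimates uniform in $g$.
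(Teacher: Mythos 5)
Your argument does not prove the stated theorem; it proves (a variant of) Proposition \ref{prop:fullpressure-symmetric-on-average}, which is a different statement with different hypotheses. Three things have gone wrong. First, Theorem \ref{thm:amenablesymmetric-implies-fullpressure} does \emph{not} assume that $\varphi\circ\pi_{1}$ is recurrent, yet your very first step invokes Theorem \ref{thm:main-theorem} to produce the homomorphism $c$ and the conservative measure $\mu_{\varphi_{c}}\times\lambda$ --- both of which exist only under recurrence. The substantive case of the theorem is precisely the one where $\varphi\circ\pi_{1}$ may be transient (note that by Corollary \ref{cor:recurrence-implies-amenable} recurrence already forces $G$ to be amenable, so in the recurrent case the amenability hypothesis is automatic and your reduction is circular in spirit); your renewal/occupation-time machinery has nothing to say when there is no conservative eigenmeasure. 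Second, you have substituted the wrong symmetry notion: the hypothesis is that $\varphi$ is \emph{asymptotically symmetric} (Definition \ref{def:asymptotically-symmetric}, an estimate with sequences $c_{n}$, $N_{n}$ comparing word-sums at nearby lengths), not \emph{symmetric on average} (Definition \ref{def:symmetric-on-average}, a limsup of ratios of Ces\`aro-type sums). The paper explicitly treats these as distinct and only relates them \emph{a posteriori} for recurrent potentials. Third, your proof never uses amenability of $G$ at all. Since Theorem \ref{thm:stadlbauer-nogap-implies-amenable} shows that $\mathcal{P}\left(\varphi\circ\pi_{1},\sigma\rtimes\Psi\right)=\mathcal{P}\left(\varphi,\sigma\right)$ forces $G$ to be amenable, and asymptotically symmetric potentials on non-amenable extensions do exist (with a genuine pressure drop), any correct proof must make essential use of amenability --- the known proofs run through Kesten's criterion or almost-invariant (F\o lner) vectors for the action of $G$, applied to the group-graded pieces of the transfer operator. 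The paper itself does not reprove this theorem; it cites it from \cite{Jaerisch12c} and \cite{Stadlbauer11}, and even remarks that those Kesten-type arguments are of a different nature from the ratio-ergodic argument you are attempting.

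A secondary issue, which would matter even if the statement were the one you think it is: your ``uniform comparison'' $\sup_{g}\limsup_{n}A_{n}(g)/A_{n}(g^{-1})<\infty$ is asserted via a first-passage/renewal decomposition whose uniformity in $g$ you yourself flag as the delicate point. In the paper's proof of Proposition \ref{prop:fullpressure-symmetric-on-average} this is handled instead by Hopf's ratio ergodic theorem for the conservative ergodic system $\left(\Sigma\times G,\mu_{\varphi_{c}}\times\lambda,\sigma\rtimes\Psi\right)$, which gives the almost-everywhere limit $1$ for the ratio of occupation sums of $\Sigma\times\{g\}$ and $\Sigma\times\{g^{-1}\}$ directly, with the $c(g)^{\pm2}$ and distortion constants extracted exactly as in your second paragraph. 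If you intend to salvage the recurrent-case computation, you should replace the renewal argument by that appeal to the ratio ergodic theorem; but this still leaves the actual theorem --- the transient, amenable case --- unproved.
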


\section{Proof of the main results\label{sec:Proof-of-the}}

\subsection{Proof of Theorem $\ref{thm:main-theorem}$}

The following lemma, which shows that, for a recurrent potential $\varphi\circ\pi_{1}$,
the associated eigenfunction of $\mathcal{L}_{\varphi\circ\pi_{1}}$
has product structure, is crucial for the proof of the main theorem. 
\begin{lem}
\label{lem:existence-homomorphism}Let $\left(\Sigma\times G,\sigma\rtimes\Psi\right)$
be a topologically mixing group-extended Markov system. Let $\varphi:\Sigma\rightarrow\R$
be Hölder continuous such that $\mathcal{P}\left(\varphi\circ\pi_{1},\sigma\rtimes\Psi\right)<\infty$.
Suppose that $\varphi\circ\pi_{1}$ is recurrent and let $h:\Sigma\times G\rightarrow\R^{+}$
be the unique continuous function (up to a constant multiple), which
is fixed by $\e^{-\mathcal{P}\left(\varphi\circ\pi_{1},\sigma\rtimes\Psi\right)}\mathcal{L}_{\varphi\circ\pi_{1}}$
and which is bounded on cylindrical sets. Then there exists a unique
homomorphism $c:G\rightarrow\left(\R^{+},\cdot\right)$ and a continuous
function $h_{1}:\Sigma\rightarrow\R^{+}$, bounded on cylindrical
sets, such that $h=\left(h_{1}\circ\pi_{1}\right)\left(c\circ\pi_{2}\right)$.
\end{lem}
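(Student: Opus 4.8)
The plan is to exploit the left-translation symmetry of the group extension in the group coordinate together with the uniqueness of the eigenfunction $h$ provided by Theorem~\ref{thm:sarig-recurrence-characterisation}. For each $g_{0}\in G$ introduce the homeomorphism $\theta_{g_{0}}:\Sigma\times G\rightarrow\Sigma\times G$, $\theta_{g_{0}}\left(\omega,g\right):=\left(\omega,g_{0}g\right)$. A one-line computation shows that $\theta_{g_{0}}$ commutes with $\sigma\rtimes\Psi$ and that $\varphi\circ\pi_{1}\circ\theta_{g_{0}}=\varphi\circ\pi_{1}$, since $\theta_{g_{0}}$ fixes the base coordinate. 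First I would record the elementary fact that any homeomorphism $\theta$ which commutes with the dynamics and preserves the potential intertwines the Perron--Frobenius operator, i.e. $\mathcal{L}_{\varphi\circ\pi_{1}}\left(u\circ\theta\right)=\left(\mathcal{L}_{\varphi\circ\pi_{1}}u\right)\circ\theta$; this is proved by reindexing the defining sum along the bijection $y\mapsto\theta\left(y\right)$ between the $\left(\sigma\rtimes\Psi\right)$-preimages of $x$ and those of $\theta\left(x\right)$, using the $\theta$-invariance of $\varphi\circ\pi_{1}$.

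Applying this with $\theta=\theta_{g_{0}}$ to the fixed point $h$ shows that $h\circ\theta_{g_{0}}$ is again fixed by $\e^{-\mathcal{P}\left(\varphi\circ\pi_{1},\sigma\rtimes\Psi\right)}\mathcal{L}_{\varphi\circ\pi_{1}}$, and it is positive, continuous and bounded on cylindrical sets because $\theta_{g_{0}}$ maps each cylinder bijectively onto a cylinder. Since $\left(\Sigma\times G,\sigma\rtimes\Psi\right)$ is topologically mixing and conjugate to a Markov shift by Remark~\ref{groupextension-is-Markovshift}, the uniqueness of such an eigenfunction forces $h\circ\theta_{g_{0}}=c\left(g_{0}\right)h$ for a unique constant $c\left(g_{0}\right)>0$. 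Because $\theta_{g_{0}}\circ\theta_{g_{1}}=\theta_{g_{0}g_{1}}$, applying the last relation twice gives $c\left(g_{0}g_{1}\right)h=c\left(g_{0}\right)c\left(g_{1}\right)h$, so positivity of $h$ yields $c\left(g_{0}g_{1}\right)=c\left(g_{0}\right)c\left(g_{1}\right)$; hence $c:G\rightarrow\left(\R^{+},\cdot\right)$ is a homomorphism. Letting $e$ denote the unit of $G$ and setting $h_{1}:=h\left(\cdot,e\right)$, the identity $h\left(\omega,g\right)=\left(h\circ\theta_{g}\right)\left(\omega,e\right)=c\left(g\right)h_{1}\left(\omega\right)$ gives the factorisation $h=\left(h_{1}\circ\pi_{1}\right)\left(c\circ\pi_{2}\right)$, with $h_{1}$ continuous, positive and bounded on cylindrical sets. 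For uniqueness of $c$ I would evaluate any such factorisation at $g=e$: a homomorphism sends $e$ to $1$, so $h_{1}$ is forced to equal $h\left(\cdot,e\right)$, and then $c$ is determined by positivity of $h_{1}$.

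The conceptual content lies entirely in the commuting translation symmetry, so what remains are routine verifications: that $\varphi\circ\pi_{1}$ is H\"older continuous on the Markov shift $\Sigma\times G$ so that Theorem~\ref{thm:sarig-recurrence-characterisation} genuinely applies, that $\theta_{g_{0}}$ preserves the class of positive continuous functions that are bounded on cylindrical sets, and the preimage-bijection argument behind the intertwining identity. The main point requiring care is keeping the side of the group operation consistent, so that $g_{0}\mapsto\theta_{g_{0}}$ is a genuine left action commuting with $\sigma\rtimes\Psi$; this is exactly what makes $c$ a homomorphism rather than an anti-homomorphism, and choosing the opposite convention would break the argument.
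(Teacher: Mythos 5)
Your proof is correct and follows essentially the same route as the paper: the paper's operators $g^{*}h(\omega,\gamma)=h(\omega,g\gamma)$ are exactly your $h\circ\theta_{g}$, and both arguments show that left translation in the group coordinate commutes with $\mathcal{L}_{\varphi\circ\pi_{1}}$ and then invoke the uniqueness of the eigenfunction from Theorem \ref{thm:sarig-recurrence-characterisation} to produce the cocycle $c$ and the factorisation. Your explicit verification that $h\circ\theta_{g}$ remains bounded on cylindrical sets and your uniqueness argument for $c$ via evaluation at the identity are details the paper leaves implicit.
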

\begin{proof}
For each $g\in G$, let $g^{*}h:\Sigma\times G\rightarrow\R$ be given
by $\left(g^{*}h\right)\left(\omega,\gamma\right):=h\left(\omega,g\gamma\right)$,
for each $\left(\omega,\gamma\right)\in\Sigma\times G$. A short calculation
shows that, for all $\left(\omega,\gamma\right)\in\Sigma\times G$,
\begin{align*}
\mathcal{L}_{\varphi\circ\pi_{1}}\left(g^{*}h\right)\left(\omega,\gamma\right) & =\sum_{i\in I:i\omega\in\Sigma}\e^{\varphi\circ\pi_{1}\left(i\omega,\gamma\left(\Psi\left(i\right)\right)^{-1}\right)}h\left(i\omega,g\gamma\left(\Psi\left(i\right)\right)^{-1}\right)\\
 & =\sum_{i\in I:i\omega\in\Sigma}\e^{\varphi\left(i\omega\right)}h\left(i\omega,g\gamma\left(\Psi\left(i\right)\right)^{-1}\right)\\
 & =\mathcal{L}_{\varphi\circ\pi_{1}}\left(h\right)\left(\omega,g\gamma\right)=\e^{\mathcal{P}\left(\varphi\circ\pi_{1},\sigma\rtimes\Psi\right)}h\left(\omega,g\gamma\right).
\end{align*}
We have thus shown that $\mathcal{L}_{\varphi\circ\pi_{1}}\left(g^{*}h\right)=\e^{\mathcal{P}\left(\varphi\circ\pi_{1},\sigma\rtimes\Psi\right)}g^{*}h$,
for each $g\in G$. By Theorem \ref{thm:sarig-recurrence-characterisation},
there exists a constant $c\left(g\right)\in\R^{+}$ such that $g^{*}h=c\left(g\right)h$.
This defines a homomorphism $c:G\rightarrow\left(\R^{+},\cdot\right)$,
since we have $c\left(g_{1}g_{2}\right)h=\left(g_{1}g_{2}\right)^{*}\left(h\right)=g_{2}^{*}\left(g_{1}^{*}\left(h\right)\right)=c\left(g_{1}\right)c\left(g_{2}\right)h$,
for all $g_{1},g_{2}\in G$. We conclude that there exists a continuous
function $h_{1}:\Sigma\rightarrow\R^{+}$, bounded on cylindrical
sets, such that $h=\left(h_{1}\circ\pi_{1}\right)\left(c\circ\pi_{2}\right)$. 
\end{proof}
We are now in the position to prove the main result. 
\begin{proof}
[Proof of Theorem $\ref{thm:main-theorem}$]First suppose that $\varphi\circ\pi_{1}$
is recurrent. Let $h:\Sigma\times G\rightarrow\R^{+}$ be the unique
continuous function (up to a constant multiple), which is fixed by
$\e^{-\mathcal{P}\left(\varphi\circ\pi_{1},\sigma\rtimes\Psi\right)}\mathcal{L}_{\varphi\circ\pi_{1}}$
and which is bounded on cylindrical sets. By Lemma \ref{lem:existence-homomorphism},
there exists a unique homomorphism $c:G\rightarrow\left(\R^{+},\cdot\right)$
and a continuous function $h_{1}:\Sigma\rightarrow\R^{+}$, bounded
on cylindrical sets, such that $h=\left(h_{1}\circ\pi_{1}\right)\left(c\circ\pi_{2}\right)$.
 Hence, the coboundary $\log h-\log h\circ\left(\sigma\rtimes\Psi\right)$
is given by 
\[
\log h-\log h\circ\left(\sigma\rtimes\Psi\right)=\log\left(h_{1}\circ\pi_{1}\right)-\log\left(h_{1}\circ\pi_{1}\right)\circ\left(\sigma\rtimes\Psi\right)+\log\left(c\circ\pi_{2}\right)-\log\left(c\circ\pi_{2}\right)\circ\left(\sigma\rtimes\Psi\right).
\]
A short calculation shows that, for each $\left(x,g\right)\in\Sigma\times G$,
\begin{equation}
\log\left(c\circ\pi_{2}\right)\left(x,g\right)-\log\left(c\circ\pi_{2}\right)\left(\sigma x,g\Psi\left(x_{1}\right)\right)=-\log c\left(\Psi\left(x_{1}\right)\right).\label{eq:00a}
\end{equation}
Therefore, the coboundary $\log h-\log h\circ\left(\sigma\rtimes\Psi\right)$
is independent of the second coordinate. Let $\varphi_{c}:\Sigma\rightarrow\R$
be given by $\varphi_{c}\left(x\right):=\varphi\left(x\right)-\log c\left(\Psi\left(x_{1}\right)\right)$,
for each $x\in\Sigma$. Clearly, $\varphi_{c}$ is Hölder continuous.
For ease of notation, let us also define $\tilde{\varphi}:\Sigma\rightarrow\R^{+}$
given by $\tilde{\varphi}:=\varphi_{c}+\log h_{1}-\log h_{1}\circ\sigma$.
We have 
\begin{equation}
\tilde{\varphi}\circ\pi_{1}=\varphi\circ\pi_{1}+\log h-\log h\circ\left(\sigma\rtimes\Psi\right).\label{eq:0a}
\end{equation}
Since $\log h$ and $\log h\circ\left(\sigma\rtimes\Psi\right)$ are
Hölder continuous by Theorem \ref{thm:sarig-recurrence-characterisation},
so are $\log h_{1}$, $\log h_{1}\circ\sigma$ and $\tilde{\varphi}$.
As a consequence, we have 
\begin{equation}
\mathcal{P}\left(\tilde{\varphi}\circ\pi_{1},\sigma\rtimes\Psi\right)=\mathcal{P}\left(\varphi\circ\pi_{1},\sigma\rtimes\Psi\right)\mbox{ and }\mathcal{P}\left(\tilde{\varphi},\sigma\right)=\mathcal{P}\left(\varphi_{c},\sigma\right).\label{eq:0}
\end{equation}
Since $h$ satisfies $\mathcal{L}_{\varphi\circ\pi_{1}}\left(h\right)=\e^{\mathcal{P}\left(\varphi\circ\pi_{1},\sigma\rtimes\Psi\right)}h$,
we have $\mathcal{L}_{\tilde{\varphi}\circ\pi_{1}}\left(\1\circ\pi_{1}\right)=\e^{\mathcal{P}\left(\varphi\circ\pi_{1},\sigma\rtimes\Psi\right)}\left(\1\circ\pi_{1}\right)$
by (\ref{eq:0a}). Combining this with the identity $\mathcal{L}_{\tilde{\varphi}\circ\pi_{1}}\left(\1\circ\pi_{1}\right)=\left(\mathcal{L}_{\tilde{\varphi}}\left(\1\right)\right)\circ\pi_{1}$,
we conclude that 
\begin{equation}
\mathcal{L}_{\tilde{\varphi}}\left(\1\right)=\e^{\mathcal{P}\left(\varphi\circ\pi_{1},\sigma\rtimes\Psi\right)}\1.\label{eq:1}
\end{equation}
In particular, we have $\mathcal{P}\left(\tilde{\varphi},\sigma\right)\le\mathcal{P}\left(\varphi\circ\pi_{1},\sigma\rtimes\Psi\right)<\infty$.
Since $\Sigma$ is finitely primitive and $\tilde{\varphi}$ is Hölder
continuous with $\mathcal{P}\left(\tilde{\varphi},\sigma\right)<\infty$,
we have that $\e^{-n\mathcal{P}\left(\tilde{\varphi},\sigma\right)}\mathcal{L}_{\tilde{\varphi}}^{n}\left(\1\right)$
converges uniformly to a bounded Hölder continuous function (see \cite[Theorem 2.4.6]{MR2003772}).
Since $\e^{-n\mathcal{P}\left(\tilde{\varphi},\sigma\right)}\mathcal{L}_{\tilde{\varphi}}^{n}\left(\1\right)=\e^{-n\left(\mathcal{P}\left(\tilde{\varphi},\sigma\right)-\mathcal{P}\left(\varphi\circ\pi_{1},\sigma\rtimes\Psi\right)\right)}\1$
by (\ref{eq:1}), we conclude that $\mathcal{P}\left(\tilde{\varphi},\sigma\right)=\mathcal{P}\left(\varphi\circ\pi_{1},\sigma\rtimes\Psi\right)$.
Combining with (\ref{eq:0}), we have thus shown that 
\begin{equation}
\mathcal{P}\left(\tilde{\varphi}\circ\pi_{1},\sigma\rtimes\Psi\right)=\mathcal{P}\left(\varphi\circ\pi_{1},\sigma\rtimes\Psi\right)=\mathcal{P}\left(\tilde{\varphi},\sigma\right)=\mathcal{P}\left(\varphi_{c},\sigma\right).\label{eq:2}
\end{equation}
Further, by definition of $\tilde{\varphi}$ and (\ref{eq:2}), the
equality in (\ref{eq:1}) implies 
\begin{equation}
\mathcal{L}_{\varphi_{c}}\left(h_{1}\right)=\e^{\mathcal{P}\left(\varphi_{c},\sigma\right)}h_{1}.\label{eq:3}
\end{equation}
Since $\Sigma$ is finitely primitive and $\varphi_{c}$ is Hölder
continuous with $\mathcal{P}\left(\varphi_{c},\sigma\right)<\infty$,
we have that $\varphi_{c}$ is positive recurrent by \cite[Corollary 2]{MR1955261}.
Since $h_{1}$ is continuous and bounded on cylindrical sets, it follows
from (\ref{eq:3}) and \cite[Corollary 2 and 3]{MR1955261} that $h_{1}$
is Hölder continuous and bounded away from zero and infinity. In particular,
$\tilde{\varphi}$ is cohomologous to $\varphi_{c}$ in the cohomology
class of bounded continuous functions, which implies that $\mu_{\tilde{\varphi}}=\mu_{\varphi_{c}}$
(see \cite[Theorem 2.2.7]{MR2003772}). We now show that $\mu_{\tilde{\varphi}}\times\lambda$
is conservative. By (\ref{eq:1}) and (\ref{eq:2}) we have $\mathcal{L}_{\tilde{\varphi}}^{*}\left(\mu_{\tilde{\varphi}}\right)=\e^{\mathcal{P}\left(\tilde{\varphi}\circ\pi_{1},\sigma\rtimes\Psi\right)}\mu_{\tilde{\varphi}}$,
which gives 
\begin{equation}
\mathcal{L}_{\tilde{\varphi}\circ\pi_{1}}^{*}\left(\mu_{\tilde{\varphi}}\times\lambda\right)=\e^{\mathcal{P}\left(\tilde{\varphi}\circ\pi_{1},\sigma\rtimes\Psi\right)}\left(\mu_{\tilde{\varphi}}\times\lambda\right).\label{eq:3a}
\end{equation}
 From (\ref{eq:0a}) and since $\varphi\circ\pi_{1}$ is recurrent,
we have that $\tilde{\varphi}\circ\pi_{1}$ is recurrent. Hence, (\ref{eq:3a})
implies that $\mu_{\tilde{\varphi}}\times\lambda$ is conservative
by Theorem \ref{thm:sarig-recurrence-characterisation}. 

We now turn to the proof of the converse implication and the uniqueness
of the homomorphism $c$ in Theorem \ref{thm:main-theorem}. To prove
this, suppose that $\mathcal{P}\left(\varphi_{c},\sigma\right)<\infty$
and that $\mu_{\varphi_{c}}\times\lambda$ is conservative, for some
homomorphism $c:G\rightarrow\left(\R^{+},\cdot\right)$. We show that
$\varphi\circ\pi_{1}$ is recurrent and that $c$ is unique. Since
$\Sigma$ is finitely primitive, there exists a Hölder continuous
function $h_{1}:\Sigma^{+}\rightarrow\R$, bounded away from zero
and infinity, such that $\mathcal{L}_{\varphi_{c}}\left(h_{1}\right)=\e^{\mathcal{P}\left(\varphi_{c},\sigma\right)}h_{1}$.
Setting $\tilde{\varphi}:=\varphi_{c}+\log h_{1}-\log h_{1}\circ\sigma$
as above, we have  $\mathcal{L}_{\tilde{\varphi}\circ\pi_{1}}^{*}\left(\mu_{\varphi_{c}}\times\lambda\right)=\e^{\mathcal{P}\left(\varphi_{c},\sigma\right)}\left(\mu_{\varphi_{c}}\times\lambda\right)$.
Since $\mu_{\varphi_{c}}\times\lambda$ is conservative, it follows
from Theorem \ref{thm:sarig-recurrence-characterisation} that $\tilde{\varphi}\circ\pi_{1}$
is recurrent and that $\mathcal{P}\left(\varphi_{c},\sigma\right)=\mathcal{P}\left(\tilde{\varphi}\circ\pi_{1},\sigma\rtimes\Psi\right)$.
Since $h_{1}$ is Hölder continuous, we conclude that $\varphi_{c}\circ\pi_{1}$
is recurrent and $\mathcal{P}\left(\tilde{\varphi}\circ\pi_{1},\sigma\rtimes\Psi\right)=\mathcal{P}\left(\varphi_{c}\circ\pi_{1},\sigma\rtimes\Psi\right)$.
Finally, the following relation, which is deduced from (\ref{eq:00a}),
\begin{equation}
\varphi_{c}\circ\pi_{1}=\varphi\circ\pi_{1}+\log\left(c\circ\pi_{2}\right)-\log\left(c\circ\pi_{2}\right)\circ\left(\sigma\rtimes\Psi\right),\label{eq:3b}
\end{equation}
gives that $\varphi\circ\pi_{1}$ is recurrent. To prove uniqueness
of $c$, observe that we have $\mathcal{P}\left(\varphi_{c}\circ\pi_{1},\sigma\rtimes\Psi\right)=\mathcal{P}\left(\varphi\circ\pi_{1},\sigma\rtimes\Psi\right)$
by (\ref{eq:3b}). We have thus shown that $\mathcal{P}\left(\varphi_{c},\sigma\right)=\mathcal{P}\left(\varphi\circ\pi_{1},\sigma\rtimes\Psi\right)$.
Therefore, we have $\mathcal{L}_{\varphi_{c}\circ\pi_{1}}\left(h_{1}\circ\pi_{1}\right)=\e^{\mathcal{P}\left(\varphi\circ\pi_{1},\sigma\rtimes\Psi\right)}\left(h_{1}\circ\pi_{1}\right)$.
Hence, $\mathcal{L}_{\varphi\circ\pi_{1}}\left(\left(c\circ\pi_{2}\right)\left(h_{1}\circ\pi_{1}\right)\right)=\e^{\mathcal{P}\left(\varphi\circ\pi_{1},\sigma\rtimes\Psi\right)}\left(c\circ\pi_{2}\right)\left(h_{1}\circ\pi_{1}\right)$
by (\ref{eq:3b}).  Since $\left(c\circ\pi_{2}\right)\left(h_{1}\circ\pi_{1}\right)$
is continuous and bounded on cylindrical sets, recurrence of $\varphi\circ\pi_{1}$
implies that $c$ is unique by Theorem \ref{thm:sarig-recurrence-characterisation}. 

To finish the proof, we are left to prove Theorem \ref{thm:main-theorem}
(\ref{enu:pressure-relation}), (\ref{enu:eigenfunction}), (\ref{enu:conformal-measure})
and (\ref{enu:equilibrium-measure}). The assertion in (\ref{enu:pressure-relation})
follows from (\ref{eq:2}). Claim (\ref{enu:eigenfunction}) follows
from Lemma \ref{lem:existence-homomorphism} and (\ref{eq:3}). To
prove (\ref{enu:conformal-measure}), let $f:\Sigma\rightarrow\R^{+}$.
By (\ref{eq:3b}) we have 
\begin{equation}
\mathcal{L}_{\varphi\circ\pi_{1}}\left(f\right)=\left(c\circ\pi_{2}\right)\mathcal{L}_{\varphi_{c}\circ\pi_{1}}\left(\frac{f}{c\circ\pi_{2}}\right).\label{eq:4}
\end{equation}
Let $\nu_{1}$ denote the unique Borel probability measure, such that
$\mathcal{L}_{\varphi_{c}}^{*}\left(\nu_{1}\right)=\e^{\mathcal{P}\left(\varphi_{c},\sigma\right)}\nu_{1}$.
By (\ref{eq:2}) we have 
\begin{equation}
\mathcal{L}_{\varphi_{c}\circ\pi_{1}}^{*}\left(\nu_{1}\times\lambda\right)=\e^{\mathcal{P}\left(\varphi\circ\pi_{1},\sigma\rtimes\Psi\right)}\left(\nu_{1}\times\lambda\right).\label{eq:5}
\end{equation}
We verify that $\mathcal{L}_{\varphi\circ\pi_{1}}^{*}\left(\left(c\circ\pi_{2}\right)^{-1}\, d\left(\nu_{1}\times\lambda\right)\right)=\e^{\mathcal{P}\left(\varphi\circ\pi_{1},\sigma\rtimes\Psi\right)}\left(c\circ\pi_{2}\right)^{-1}\, d\left(\nu_{1}\times\lambda\right)$.
By using (\ref{eq:4}) and (\ref{eq:5}), we have 
\[
\mathcal{L}_{\varphi\circ\pi_{1}}^{*}\left(\frac{d\left(\nu_{1}\times\lambda\right)}{c\circ\pi_{2}}\right)\left(f\right)=\frac{d\left(\nu_{1}\times\lambda\right)}{c\circ\pi_{2}}\left(\left(c\circ\pi_{2}\right)\mathcal{L}_{\varphi_{c}\circ\pi_{1}}\left(\frac{f}{c\circ\pi_{2}}\right)\right)=\e^{\mathcal{P}\left(\varphi\circ\pi_{1},\sigma\rtimes\Psi\right)}\left(\nu_{1}\times\lambda\right)\left(\frac{f}{c\circ\pi_{2}}\right).
\]
This finishes the proof of (\ref{enu:conformal-measure}). Clearly,
the assertion in (\ref{enu:equilibrium-measure}) follows by combining
(\ref{enu:eigenfunction}),  (\ref{enu:conformal-measure}) and the
fact that $\mu_{\varphi_{c}}=h_{1}\, d\nu_{1}$. The proof is complete.
\end{proof}

\subsection{Proof of Proposition \ref{prop:fullpressure-symmetric-on-average}
and Remark \ref{irreducible-remark}}
\begin{proof}
[Proof of Proposition $\ref{prop:fullpressure-symmetric-on-average}$]Let
$\varphi\circ\pi_{1}$ be recurrent and let $c:G\rightarrow\left(\R^{+},\cdot\right)$
denote the homomorphism given by Theorem \ref{thm:main-theorem}.
By Theorem \ref{thm:main-theorem} (\ref{enu:equilibrium-measure}),
we have that $\mu_{\varphi_{c}}\times\lambda$ is the equilibrium
measure of $\varphi\circ\pi_{1}$. In particular, $\left(\Sigma\times G,\mu_{\varphi_{c}}\times\lambda,\sigma\rtimes\Psi\right)$
is a conservative ergodic measure preserving dynamical system with
transfer operator $\e^{-\mathcal{P}\left(\varphi\circ\pi_{1},\sigma\rtimes\Psi\right)}h^{-1}\mathcal{L}_{\varphi\circ\pi_{1}}\left(h\cdot\right)$,
where $h=\left(h_{1}\circ\pi_{1}\right)\left(c\circ\pi_{2}\right)$,
for the Hölder continuous function $h_{1}:\Sigma\rightarrow\R^{+}$,
bounded away from zero and infinity, given by Theorem \ref{thm:main-theorem}
(\ref{enu:eigenfunction}). By the ratio ergodic theorem (see e.g.
\cite[Theorem 2.2.1]{MR1450400}), we conclude that, for each $g\in G$,
we have $\left(\mu_{\varphi_{c}}\times\lambda\right)$-almost everywhere
\[
\lim_{n\rightarrow\infty}\frac{\sum_{k=1}^{n}\e^{-k\mathcal{P}\left(\varphi\circ\pi_{1},\sigma\rtimes\Psi\right)}h^{-1}\mathcal{L}_{\varphi\circ\pi_{1}}^{k}\left(h\1_{\Sigma\times\left\{ g\right\} }\right)}{\sum_{k=1}^{n}\e^{-k\mathcal{P}\left(\varphi\circ\pi_{1},\sigma\rtimes\Psi\right)}h^{-1}\mathcal{L}_{\varphi\circ\pi_{1}}^{k}\left(h\1_{\Sigma\times\{g^{-1}\}}\right)}=\frac{\left(\mu_{\varphi_{c}}\times\lambda\right)\left(\1_{\Sigma\times\left\{ g\right\} }\right)}{\left(\mu_{\varphi_{c}}\times\lambda\right)\left(\1_{\Sigma\times\{g^{-1}\}}\right)}=1.
\]
Consequently, using that, for each $k\in\N$ and $g\in G$, 
\[
\left(\sup h_{1}\right)^{-1}c\left(g\right)^{-1}\mathcal{L}_{\varphi\circ\pi_{1}}^{k}\left(h\1_{\Sigma\times\left\{ g\right\} }\right)\le\mathcal{L}_{\varphi\circ\pi_{1}}^{k}\left(\1_{\Sigma\times\left\{ g\right\} }\right)\le\left(\inf h_{1}\right)^{-1}c\left(g\right)^{-1}\mathcal{L}_{\varphi\circ\pi_{1}}^{k}\left(h\1_{\Sigma\times\left\{ g\right\} }\right),
\]
it follows that we have $\left(\mu_{\varphi_{c}}\times\lambda\right)$-almost
everywhere 
\begin{equation}
c\left(g\right)^{-2}\frac{\inf h_{1}}{\sup h_{1}}\le\limsup_{n\rightarrow\infty}\frac{\sum_{k=1}^{n}\e^{-k\mathcal{P}\left(\varphi\circ\pi_{1},\sigma\rtimes\Psi\right)}\mathcal{L}_{\varphi\circ\pi_{1}}^{k}\left(\1_{\Sigma\times\left\{ g\right\} }\right)}{\sum_{k=1}^{n}\e^{-k\mathcal{P}\left(\varphi\circ\pi_{1},\sigma\rtimes\Psi\right)}\mathcal{L}_{\varphi\circ\pi_{1}}^{k}\left(\1_{\Sigma\times\{g^{-1}\}}\right)}\le c\left(g\right)^{-2}\frac{\sup h_{1}}{\inf h_{1}}.\label{eq:ratioergodic-estimate0}
\end{equation}
Since $\Sigma$ is finitely primitive, there exists a finite set
$F\subset I$ such that, for each $a\in I$, there exists $i\in F$
with $ai\in\Sigma^{2}$. Fix $F$ and choose a point $x\left(i\right)\in\left[i\right]$,
for each $i\in F$. Because $(\mu_{\varphi_{c}}\times\lambda)$ is
positive on cylindrical sets and the Hölder continuous function $\varphi$
has the bounded distortion property by Fact \ref{fact-bounded-distortion-property},
it follows from (\ref{eq:ratioergodic-estimate0}) that there exists
$C_{\varphi}\ge1$, such that for each $g\in G$,  
\begin{equation}
c\left(g\right)^{-2}\frac{\inf h_{1}}{\sup h_{1}}C_{\varphi}^{-2}\le\limsup_{n\rightarrow\infty}\frac{\sum_{k=1}^{n}\e^{-k\mathcal{P}\left(\varphi\circ\pi_{1},\sigma\rtimes\Psi\right)}\sum_{i\in F}\mathcal{L}_{\varphi\circ\pi_{1}}^{k}\left(\1_{\Sigma\times\left\{ g\right\} }\right)\left(x\left(i\right),\id\right)}{\sum_{k=1}^{n}\e^{-k\mathcal{P}\left(\varphi\circ\pi_{1},\sigma\rtimes\Psi\right)}\sum_{i\in F}\mathcal{L}_{\varphi\circ\pi_{1}}^{k}\left(\1_{\Sigma\times\{g^{-1}\}}\right)\left(x\left(i\right),\id\right)}\le c\left(g\right)^{-2}\frac{\sup h_{1}}{\inf h_{1}}C_{\varphi}^{2}.\label{eq:ratioergodic-estimate2}
\end{equation}
A homomorphism $c:G\rightarrow\left(\R^{+},\cdot\right)$ is bounded
away from zero (bounded away from infinity, respectively) if and only
if $c=1$. Hence, by (\ref{eq:ratioergodic-estimate2}), we obtain
that $c=1$ if and only if 
\begin{equation}
\sup_{g\in G}\limsup_{n\rightarrow\infty}\frac{\sum_{k=1}^{n}\e^{-k\mathcal{P}\left(\varphi\circ\pi_{1},\sigma\rtimes\Psi\right)}\sum_{i\in F}\mathcal{L}_{\varphi\circ\pi_{1}}^{k}\left(\1_{\Sigma\times\left\{ g\right\} }\right)\left(x(i),\id\right)}{\sum_{k=1}^{n}\e^{-k\mathcal{P}\left(\varphi\circ\pi_{1},\sigma\rtimes\Psi\right)}\sum_{i\in F}\mathcal{L}_{\varphi\circ\pi_{1}}^{k}\left(\1_{\Sigma\times\{g^{-1}\}}\right)\left(x(i),\id\right)}<\infty.\label{eq:ratioergodic-estimate3}
\end{equation}
To finish the proof, we show that (\ref{eq:ratioergodic-estimate3})
holds if and only if $\varphi$ is symmetric on average with respect
to $\Psi$. Let $k\in\N$ and $g\in G$. Since $\card\left(F\right)<\infty$,
we have that 
\begin{equation}
\sum_{i\in F}\sum_{\omega\in\Sigma^{k}:\Psi\left(\omega\right)=g^{-1},\omega x(i)\in\Sigma}\e^{S_{k}\varphi\left(\omega x\left(i\right)\right)}\le\card\left(F\right)\sum_{\omega\in\Sigma^{k}:\Psi\left(\omega\right)=g^{-1}}\e^{\sup S_{k}\varphi_{|\left[\omega\right]}}.\label{eq:pf-00}
\end{equation}
Because for each $\omega\in\Sigma^{*}$ there exists $i\in F$, such
that $\omega x(i)\in\Sigma$ and since $\varphi$ has the bounded
distortion property, we have 
\begin{equation}
\sum_{\omega\in\Sigma^{k}:\Psi\left(\omega\right)=g^{-1}}\e^{\sup S_{k}\varphi_{|\left[\omega\right]}}\le C_{\varphi}\sum_{i\in F}\sum_{\omega\in\Sigma^{k}:\Psi\left(\omega\right)=g^{-1},\omega x(i)\in\Sigma}\e^{S_{k}\varphi\left(\omega x\left(i\right)\right)}.\label{eq:pf-01}
\end{equation}
Moreover, observe that, for each $x\in\Sigma$, we have 
\begin{equation}
\mathcal{L}_{\varphi\circ\pi_{1}}^{k}\left(\1_{\Sigma\times\left\{ g\right\} }\right)\left(x,\id\right)=\sum_{\omega\in\Sigma^{k}:\Psi\left(\omega\right)=g^{-1},\omega x\in\Sigma}\e^{S_{k}\varphi\left(\omega x\right)}.\label{eq:pf-1}
\end{equation}
Combining (\ref{eq:pf-00}), (\ref{eq:pf-01}) and (\ref{eq:pf-1}),
we obtain that 
\[
C_{\varphi}^{-1}\sum_{\omega\in\Sigma^{k}:\Psi\left(\omega\right)=g^{-1}}\e^{\sup S_{k}\varphi_{|\left[\omega\right]}}\le\sum_{i\in F}\mathcal{L}_{\varphi\circ\pi_{1}}^{k}\left(\1_{\Sigma\times\left\{ g\right\} }\right)\left(x(i),\id\right)\le\card\left(F\right)\sum_{\omega\in\Sigma^{k}:\Psi\left(\omega\right)=g^{-1}}\e^{\sup S_{k}\varphi_{|\left[\omega\right]}}.
\]
Hence, (\ref{eq:ratioergodic-estimate3}) holds if and only if $\varphi$
is symmetric on average with respect to $\Psi$.
\end{proof}

\begin{proof}
[Proof of Remark   $\ref{irreducible-remark}$] Let $p>1$ denote
the period of the irreducible Markov shift $\left(\Sigma\times G,\sigma\rtimes\Psi\right)$.
We consider the left shift $\sigma^{\left(p\right)}:\Sigma^{\left(p\right)}\rightarrow\Sigma^{\left(p\right)}$
on the Markov shift $(\Sigma^{\left(p\right)},\sigma^{\left(p\right)})$,
which is conjugated to $\left(\Sigma,\sigma^{p}\right)$ via $\iota:\Sigma^{\left(p\right)}\rightarrow\Sigma$
as in Remark \ref{periodicity-aperiodic-remark}. Let $\Psi^{\left(p\right)}:(\Sigma^{\left(p\right)})^{*}\rightarrow G$
denote the unique semigroup homomorphism such that $\Psi^{\left(p\right)}\left(\omega\right)=\Psi\left(\omega\right)$,
for each $\omega\in\Sigma^{p}$, and observe that $\left(\sigma\rtimes\Psi\right)^{p}:\Sigma\times G\rightarrow\Sigma\times G$
is conjugated to the aperiodic Markov shift $\sigma^{\left(p\right)}\rtimes\Psi^{\left(p\right)}:\Sigma^{\left(p\right)}\times G\rightarrow\Sigma^{\left(p\right)}\times G$.
Denote the set of irreducible components of $\big(\Sigma^{\left(p\right)}\times G,\sigma^{\left(p\right)}\rtimes\Psi^{\left(p\right)}\big)$
by $\mathcal{V}$. Using that $\Sigma$ is finitely primitive and
$\Sigma\times G$ is irreducible, one verifies that there exists $l\in\N$
such that
\begin{equation}
\forall i,j\in I\,\exists\tau\in\Sigma^{pl}:\, i\tau j\in\Sigma^{pl+2}\mbox{ and }\Psi\left(\tau\right)=\id.\label{eq:fin-prim-wrt-loops}
\end{equation}
From (\ref{eq:fin-prim-wrt-loops}), it follows that there exists
a partition $\left(G_{V}\right)_{V\in\mathcal{V}}$ of $G$ such that
$V=\Sigma^{\left(p\right)}\times G_{V}$, for each $V\in\mathcal{V}$.
We choose the component $V_{\id}$ containing $\Sigma^{\left(p\right)}\times\left\{ \id\right\} $
and note that $V_{\id}=\Sigma^{\left(p\right)}\times G_{0}$ where
$G_{0}:=\bigcup_{n\in\N}\Psi\left(\Sigma^{np}\right)$. Using (\ref{eq:fin-prim-wrt-loops})
one obtains that $G_{0}$ is a subsemigroup of $G$. To prove that
$G_{0}$ is a group, let $g\in G_{0}$, $n\in\N$ and $\omega\in\Sigma^{np}$
such that $g=\Psi\left(\omega\right)$. Since $\Sigma\times G$ is
irreducible, there exists $r\in\N$ and $\gamma\in\Sigma^{r}$ such
that $\overline{\omega\gamma}:=\left(\omega\gamma\omega\gamma\dots\right)\in\Sigma$
and $\Psi\left(\omega\gamma\right)=\id$. Since $(\sigma\rtimes\Psi)^{np+r}\left(\overline{\omega\gamma},\id\right)=\left(\overline{\omega\gamma},\id\right)$
and $(\Sigma\times G,\sigma\rtimes\Psi)$ has period $p$, there exists
$m\in\N$ such that $r=mp$. Hence, we have $g^{-1}=\Psi\left(\gamma\right)\in G_{0}$.
Let us also verify that the index of $G_{0}$ in $G$ is at most $p$.
To prove this, fix some $\alpha\in\Sigma^{p}$ and let $g\in G$.
Since $\left(\Sigma\times G,\sigma\rtimes\Psi\right)$ is irreducible,
there exists $\tau\in\Sigma^{*}$ such that $g=\Psi\left(\alpha\right)\Psi\left(\tau\right)$.
Then there exists $u\in\left\{ 0,\dots,p-1\right\} $ such that $g\in\Psi\left(\alpha_{1}\dots\alpha_{u}\right)G_{0}$,
which proves that the index is at most $p$. 

Next, define the potential $\varphi^{\left(p\right)}:\Sigma^{\left(p\right)}\rightarrow\R$,
given by $\varphi^{\left(p\right)}:=S_{p}\varphi\circ\iota$. As observed
in Remark \ref{periodicity-aperiodic-remark}, we have 
\begin{equation}
\mathcal{P}\left(\varphi^{\left(p\right)}\circ\pi_{1}\big|_{V_{\id}},\sigma^{\left(p\right)}\rtimes\Psi^{\left(p\right)}\big|_{V_{\id}}\right)=p\mathcal{P}\left(\varphi\circ\pi_{1},\sigma\rtimes\Psi\right),\quad\mathcal{P}\left(\varphi^{\left(p\right)},\sigma^{\left(p\right)}\right)=p\mathcal{P}\left(\varphi,\sigma\right).\label{eq:mainproof-irreducible-1}
\end{equation}
Further, since $\varphi\circ\pi_{1}$ is recurrent, so is $\varphi^{\left(p\right)}\circ\pi_{1}\big|_{V_{\id}}$.
Moreover, $\Sigma^{\left(p\right)}$ is finitely primitive and $\sigma^{\left(p\right)}\rtimes\Psi^{\left(p\right)}:\Sigma^{\left(p\right)}\times G_{0}\rightarrow\Sigma^{\left(p\right)}\times G_{0}$
is topologically mixing. Hence, the results of Corollary \ref{cor:recurrence-implies-amenable}
hold with $G$ replaced by $G_{0}$. Since the index of $G_{0}$
in $G$ is finite, it is well-known that $G$ is amenable if and only
if $G_{0}$ is amenable. Also, $G$ is finite if and only if $G_{0}$
is finite. Hence, the results of Corollary \ref{cor:recurrence-implies-amenable}
hold for $G$ under the assumption that $\Sigma\times G$ is irreducible. 

Next we prove that the equality $\mathcal{P}\left(\varphi\circ\pi_{1},\sigma\rtimes\Psi\right)=\mathcal{P}\left(\varphi,\sigma\right)$
holds if and only if 
\begin{equation}
\sup_{g\in G_{0}}\limsup_{n\rightarrow\infty}\frac{\sum_{k=1}^{n}\e^{-kp\mathcal{P}\left(\varphi\circ\pi_{1},\sigma\rtimes\Psi\right)}\sum_{\omega\in\Sigma^{kp}:\Psi\left(\omega\right)=g}\e^{\sup S_{kp}\varphi_{|\left[\omega\right]}}}{\sum_{k=1}^{n}\e^{-kp\mathcal{P}\left(\varphi\circ\pi_{1},\sigma\rtimes\Psi\right)}\sum_{\omega\in\Sigma^{kp}:\Psi\left(\omega\right)=g^{-1}}\e^{\sup S_{kp}\varphi_{|\left[\omega\right]}}}<\infty.\label{eq:phi-p-symmetriconaverage}
\end{equation}
It follows from (\ref{eq:mainproof-irreducible-1}) and Proposition
\ref{prop:fullpressure-symmetric-on-average}, applied to the group-extended
Markov system $(\Sigma^{\left(p\right)}\times G_{0},\sigma^{\left(p\right)}\rtimes\Psi^{\left(p\right)})$
and the potential $\varphi^{\left(p\right)}$, that $\mathcal{P}\left(\varphi\circ\pi_{1},\sigma\rtimes\Psi\right)=\mathcal{P}\left(\varphi,\sigma\right)$
holds if and only if $\varphi^{\left(p\right)}$ is symmetric on average
with respect to $\Psi^{\left(p\right)}$. It is easy to see that $\varphi^{\left(p\right)}$
is symmetric on average with respect to $\Psi^{\left(p\right)}$ if
(\ref{eq:phi-p-symmetriconaverage}) holds. The proof is complete.
\end{proof}

\subsection{Proof of Proposition \ref{prop:classification-of-recurrentgroups-for-markov}}
\begin{proof}
[Proof of Proposition $\ref{prop:classification-of-recurrentgroups-for-markov}$]
Without loss of generality, we may assume that $\card\left(G\right)=\infty$.
Using that $\varphi\circ\pi_{1}$ is recurrent, similarly as in \cite[Proof of Theorem 1.2]{Jaerisch11a},
we deduce that there exists a recurrent random walk $P$ on the undirected
graph $X$ with vertex set $V:=I\times G$, in which $\left(i,g\right),\left(j,h\right)\in I\times G$
are connected by an edge if 
\[
\left(\sigma\rtimes\Psi\right)^{-1}\left(\left[i\right]\times\left\{ g\right\} \right)\cap\left(\left[j\right]\times\left\{ h\right\} \right)\neq\emptyset\,\,\,\mbox{or}\,\,\left(\sigma\rtimes\Psi\right)^{-1}\left(\left[j\right]\times\left\{ h\right\} \right)\cap\left(\left[i\right]\times\left\{ g\right\} \right)\neq\emptyset.
\]
Since $\left(\Sigma\times G,\sigma\rtimes\Psi\right)$ is irreducible,
we have that $X$ is connected. Let $d$ denote the associated graph
metric of $X$. Further, let $\mathrm{Aut}\left(X,P\right)$ denote
the group of all invertible self-isometries $\gamma$ of the metric
space $\left(X,d\right)$ which satisfy $p\left(x,y\right)=p\left(\gamma x,\gamma y\right)$,
for all $x,y\in V$. Following \cite[Proof of Theorem 1.2]{Jaerisch11a},
each element $g\in G$ gives rise to a self-isometry $\gamma_{g}$
on $\left(X,d\right)$ given by $\gamma_{g}\left(i,h\right):=\left(i,gh\right)$,
for all $\left(i,h\right)\in I\times G$. Since $\varphi\circ\pi_{1}$
does not depend on the second coordinate, one immediately verifies
that $\gamma_{g}\in\mathrm{Aut}\left(X,P\right)$, for each $g\in G$.
Note that $\iota:G\rightarrow\mathrm{Aut}\left(X,P\right)$ given
by $g\mapsto\gamma_{g}$ defines a monomorphism of groups and we may
thus consider $G$ as a subgroup of $\mathrm{Aut}\left(X,P\right)$.
We equip $\mathrm{Aut}\left(X,P\right)$ with the topology of pointwise
convergence and note that $\iota\left(G\right)$ is a  discrete subgroup
of $\mathrm{Aut}\left(X,P\right)$. We do not distinguish between
$G$ and $\iota\left(G\right)$. Since $\card\left(I\right)<\infty$,
 the action of $G$ is quasi-transitive, that is, $G$ acts with finitely
many orbits, namely the orbits are $\left\{ i\right\} \times G$,
$i\in I$. Since the random walk $P$ is recurrent, it follows from
\cite[Theorem 5.13]{MR1743100} that $X$ is roughly isometric with
the $1$- or $2$-dimensional grid. In particular, the growth function
of $X$ is polynomial of degree one or two. We can then apply \cite[Theorem 4.1]{MR1397471}
to the discrete subgroup $G$ of $\mathrm{Aut}\left(X,P\right)$.
The theorem states that there exists a compact normal subgroup $N$
of $G$ such that $G/N$ contains $\Z$ or $\Z^{2}$ as a finite index
subgroup. Because $N$ is compact and discrete, we have that $N$
is finite and hence, $G$ is a recurrent group. 
\end{proof}
\providecommand{\bysame}{\leavevmode\hbox to3em{\hrulefill}\thinspace}
\providecommand{\MR}{\relax\ifhmode\unskip\space\fi MR }
% \MRhref is called by the amsart/book/proc definition of \MR.
\providecommand{\MRhref}[2]{%
  \href{http://www.ams.org/mathscinet-getitem?mr=#1}{#2}
}
\providecommand{\href}[2]{#2}

\end{document}